\theoremstyle{plain}
\theoremstyle{plain}
\newtheorem{theorem}{Theorem}[section]
\newtheorem{lemma}[theorem]{Lemma}
\newtheorem{corollary}[theorem]{Corollary}
\theoremstyle{remark}
\newtheorem{remark}[equation]{Remark}
\newtheorem{question}[theorem]{Question}
\theoremstyle{definition}
\newcommand{\cal}{\EuScript}
\newcommand{\id}{\textup{id}}
\let\lim=\relax
\DeclareMathOperator*{\lim}{lim}
\newcommand{\tr}{\textup{tr}}
\newcommand{\Fr}{\textup{Fr}}
\newcommand{\Rm}{\textup{Rm}}
\newcommand{\End}{\textup{End}}
\renewcommand{\contentsname}{}
\begin{document}
\title{Riemannian structures and point-counting}
\author{Masoud Zargar}
\renewcommand{\contentsname}{}
\begin{abstract}
Suppose $(X_n)$ is a sequence of positive-dimensional smooth projective complete intersections over $\mathbb{F}_q$ with dimensions bounded from above and with characteristic zero lifts $(\tilde{X}_n)$ to smooth projective geometrically connected varieties. Suppose each complex variety $\tilde{X}^{an}_n$ has (underlying real manifold equipped with) a Riemannian metric $g_n$ of sectional curvature at least $-\kappa_n^2$, $\kappa_n\geq 0$, and diameter at most $D_n$. In this note, we show that if 
\[\lim_{n\rightarrow\infty}\min\{d:X_n(\mathbb{F}_{q^d})\neq\emptyset\}=+\infty,\]
then $\kappa_nD_n\rightarrow+\infty$. We deduce this theorem by proving a more general theorem estimating the number of points over finite fields of the above varieties in terms of the sectional curvature and diameter of Riemannian structures on the analytification of characteristic zero lifts. We also prove a version of this estimate for non-projective varieties equipped with complete metrics of non-negative sectional curvature. Finally, we prove a characteristic zero analogue of the above estimate by relating fixed-points of algebraic endomorphisms of smooth projective complex complete intersections to Riemannian structures on them.
\end{abstract}
\maketitle
\vspace{-1cm}
\section{Introduction}\label{intro}
The relation between topology, arithmetic, and dynamics is not new. For example, the genus of a smooth projective curve over a number field has a great influence on its arithmetic. Indeed, Mordell's conjecture, now a theorem thanks to Faltings~\cite{Faltings}, states that a smooth projective curve of genus at least $2$ over a number field $k$ has finitely many  $k$-points. Furthermore, beginning with the crystallization of the ideas of Weil by the Grothendieck school of algebraic geometry, we know that topological data is useful in deducing arithmetic information about varieties over finite fields. For example, the Grothendieck-Lefschetz fixed-point theorem establishes an intimate connection between the action of Frobenius on $\ell$-adic cohomology groups and the number of points over finite fields of varieties. On the other hand, one of the main themes in Riemannian geometry is the deduction of (global) topological information from (mostly local) geometric information. For example, certain restrictions on various forms of curvature put restrictions on the fundamental group, Betti numbers, and sometimes even homeomorphism type. The first instance of this phenomenon is the Gauss-Bonnet theorem. Therefore, it is not surprising that Riemannian structures on varieties have something to say about their arithmetic. In this paper, instead of topology, we relate Riemannian structures to the dynamics and arithmetic of our varieties. More precisely, we show that simultaneous bounds on the diameter and sectional curvature of the underlying real manifold of the complexification of smooth characteristic zero lifts of varieties have arithmetic (over finite fields) as well as dynamical consequences.\\
\\
One of the main tools for proving the existence of fixed-points of endomorphisms in topology is the Lefschetz fixed-point theorem. Its analogue in arithmetic, the Grothendieck-Lefschetz fixed-point theorem or more generally the Grothendieck trace formula, is used to count the number of points of a variety over finite fields, that is, the number of fixed-points of the powers of the Frobenius endomorphism. One of the techniques used in the theory of exponential sums is combining the Grothendieck trace formula, the Weil conjectures, and bounds on the sum of $\ell$-adic Betti numbers to estimate the number of $\mathbb{F}_q$-points on relevant varieties. This technique has led to many nontrivial bounds. When estimating the constants that show up in the estimates, it is useful to bound the sum of the $\ell$-adic Betti numbers from above. A few such bounds on sums of $\ell$-adic Betti numbers have been proved by N.Katz~\cite{Katzbetti}. On the transcendental side, we also have analogues of such tools that, in fact, precede the above arithmetic tools. Before Deligne's proof of the Weil conjectures, Serre proved a characteristic zero analogue for algebraic endomorphisms of smooth projective complex varieties~\cite{Serre}. Moreover, Gromov has proved an upper bound on the sum of Betti numbers of closed connected Riemannian manifolds in terms of the geometry of the manifold. This upper bound depends only on the dimension, sectional curvature, and diameter of the closed connected Riemannian manifold. By combining these tools, we estimate the number of points over finite fields in terms of properties of the Riemannian structure. Furthermore, we also prove a result for varieties that are not necessarily projective (and so do not give rise to \textit{compact} Riemannian manifolds). This is done by considering smooth varieties that, in addition to some other conditions, have complex analytifications that admit \textit{complete} Riemannian metrics with nonnegative sectional curvature. Similar theorems can be proved for Ricci curvature; see Remark~\ref{Ricci} below. As an application, we prove the following qualitative result relating points over finite fields to Riemannian structures on the analytification of characteristic zero lifts. Throughout this paper, Riemannian structures on a smooth complex variety are Riemannian structures on the underlying real manifold. Furthermore, we say a variety over a finite field $\mathbb{F}_q$ has a smooth characteristic zero lift if it is obtained, say, as the fiber (over $\mathbb{F}_q$) of a smooth family over the ring of integers of a number field with possibly finitely many primes of the ring of integers inverted.
\begin{theorem}\label{main}Suppose $(X_n)$ is a sequence of positive-dimensional smooth projective complete intersections over $\mathbb{F}_q$ with dimensions bounded from above and with characteristic zero lifts $(\tilde{X}_n)$ to smooth projective geometrically connected varieties. Suppose each complex variety $\tilde{X}^{an}_n$ is equipped with a Riemannian metric $g_n$ of sectional curvature at least $-\kappa_n^2$, $\kappa_n\geq 0$, and diameter at most $D_n$. If 
\[\lim_{n\rightarrow\infty}\min\{d:X_n(\mathbb{F}_{q^d})\neq\emptyset\}=+\infty,\]
then $\kappa_nD_n\rightarrow+\infty$. 
\end{theorem}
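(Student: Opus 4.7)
The plan is to prove the contrapositive: assuming $\kappa_n D_n$ stays bounded, I will show $\min\{d : X_n(\mathbb{F}_{q^d}) \neq \emptyset\}$ is bounded. The argument combines the Grothendieck--Lefschetz trace formula with Deligne's Weil conjectures and weak Lefschetz (to estimate point counts in terms of the primitive middle Betti number), then Artin comparison plus smooth and proper base change (to transfer this Betti number to the complex analytification), and finally Gromov's Betti number theorem (to control it by the Riemannian data).

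For the point count, starting from
\[\#X_n(\mathbb{F}_{q^d}) = \sum_i (-1)^i \tr(\Fr^d | H^i_{\et}(X_{n,\bar{\mathbb{F}}_q}, \mathbb{Q}_\ell)),\]
weak Lefschetz for a smooth projective complete intersection of dimension $m_n$ identifies $H^i_{\et}$ with the cohomology of the ambient projective space for $i \neq m_n$: it vanishes for odd $i$ and equals $\mathbb{Q}_\ell(-i/2)$ for even $i$. In the middle degree, $H^{m_n}_{\et}$ splits off a Tate summand (when $m_n$ is even) plus a primitive part of dimension $b^{\textup{prim}}_{m_n}$, on which Deligne gives Frobenius eigenvalues of absolute value $q^{m_n/2}$. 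Assembling,
\[\bigl|\#X_n(\mathbb{F}_{q^d}) - (1 + q^d + \cdots + q^{m_n d})\bigr| \leq b^{\textup{prim}}_{m_n}\, q^{m_n d / 2},\]
so $X_n(\mathbb{F}_{q^d}) \neq \emptyset$ as soon as $q^{m_n d/2} > b^{\textup{prim}}_{m_n}$, yielding
\[\min\{d : X_n(\mathbb{F}_{q^d}) \neq \emptyset\} \leq \frac{2 \log b^{\textup{prim}}_{m_n}}{m_n \log q} + O(1).\]

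Spreading the smooth characteristic zero lift $\tilde{X}_n$ to a smooth projective model over $\mathcal{O}_K[S^{-1}]$, which is precisely the content of the hypothesis, smooth and proper base change with Artin comparison equate the $\ell$-adic Betti numbers of $X_n$ with the singular Betti numbers of $\tilde{X}_n^{an}$. Gromov's Betti number theorem applied to the closed connected Riemannian manifold $(\tilde{X}_n^{an}, g_n)$ of real dimension $2m_n$ then gives
\[b^{\textup{prim}}_{m_n} \leq \sum_i b_i(\tilde{X}_n^{an};\mathbb{Q}) \leq C(2m_n)^{1 + \kappa_n D_n},\]
and since $m_n$ is uniformly bounded, $C(2m_n) \leq C_0$ for a universal $C_0$. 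Chaining the estimates,
\[\min\{d : X_n(\mathbb{F}_{q^d}) \neq \emptyset\} \leq \frac{2(1 + \kappa_n D_n)\log C_0}{\log q} + O(1),\]
which is bounded whenever $\kappa_n D_n$ is bounded, establishing the contrapositive.

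The main subtlety is verifying that the weak Lefschetz description really pins down the $\ell$-adic cohomology of a complete intersection to Tate twists away from the middle degree, so that only $b^{\textup{prim}}_{m_n}$ enters the error term in the trace formula; otherwise the whole sum of Betti numbers would be needed and, while this still feeds into Gromov, the dependence on dimension and eigenvalue weights must be tracked more carefully. Everything else --- Artin comparison, smooth proper base change, and the precise form of Gromov's inequality --- is off the shelf.
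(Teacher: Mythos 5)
Your proposal is correct and follows essentially the same route as the paper: the paper deduces Theorem~\ref{main} as an immediate corollary of Theorem~\ref{A}, whose proof chains the Grothendieck--Lefschetz trace formula, Katz's Lemma~\ref{Katztheorem} on the cohomology of complete intersections, the Weil conjectures, smooth proper base change with Artin comparison, and Gromov's Betti number bound --- exactly the ingredients you use. Your only deviation is quantitative: you isolate the primitive middle Betti number $b^{\textup{prim}}_{m_n}$ as the error coefficient, whereas the paper simply uses the full sum of Betti numbers plus an $(n+1)$ correction; both are subsumed by Gromov's bound, so this is a cosmetic refinement rather than a different argument.
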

We may normalize the Riemannian structures so that the diameters are bounded from above by $1$ or that the sectional curvatures are bounded from below by $-1$. In this case, then, morally, this theorem says that if for each finite field extension of $\mathbb{F}_q$ there is always a variety among the infinite family that avoids points over the finite field extension, the analytifications of the characteristic zero lifts must become arbitrarily negatively curved or become arbitrarily large in diameter, respectively, no matter what Riemannian metrics we choose as above.\\
\\
This theorem is a corollary of the following more general theorem giving an estimate on the number of $\mathbb{F}_q$-points of our varieties in terms of the geometry of an underlying Riemannian structure.
\begin{theorem}\label{A}
For each $n\geq 1$, there is a constant $C=C(n)$ such that the following is true. Suppose $X$ is an $n$-dimensional smooth projective complete intersection over $\mathbb{F}_q$ with a characteristic zero lift $\tilde{X}$ whose generic fiber is a geometrically connected smooth projective variety. Furthermore, suppose $\tilde{X}^{an}$ admits a Riemannian metric whose sectional curvature is at least $-\kappa^2$, $\kappa\geq 0$, and whose diameter is at most $D$. Then 
\[||X(\mathbb{F}_q)|-|\mathbb{P}^n(\mathbb{F}_q)||\leq C^{1+\kappa D}q^{n/2}.\]
In particular, there is a constant $K=K(n)$ depending only on $n$ such that $X$ has an $\mathbb{F}_q$-point if $q>K^{1+\kappa D}$.
\end{theorem}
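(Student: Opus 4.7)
The plan is to combine the Grothendieck-Lefschetz trace formula, Deligne's purity theorem, and Gromov's bound on Betti numbers of Riemannian manifolds. The heart of the argument is that, for a smooth projective complete intersection, the trace formula reduces the left-hand side to a trace on the primitive middle $\ell$-adic cohomology, whose dimension can be controlled geometrically through the lift.

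First, I would fix a prime $\ell$ coprime to $q$ and to the primes of bad reduction of the lift $\tilde{X}$, and apply the Grothendieck-Lefschetz trace formula. By the weak Lefschetz theorem for complete intersections, the restriction $H^i_{\et}(\mathbb{P}^{n+r}_{\overline{\mathbb{F}}_q}, \mathbb{Q}_\ell) \to H^i_{\et}(X_{\overline{\mathbb{F}}_q}, \mathbb{Q}_\ell)$ is an isomorphism of Galois modules for $i < n$; combined with Poincar\'e duality one obtains $H^i_{\et}(X_{\overline{\mathbb{F}}_q}, \mathbb{Q}_\ell) \cong H^i_{\et}(\mathbb{P}^n_{\overline{\mathbb{F}}_q}, \mathbb{Q}_\ell)$ as Galois modules for every $i \neq n$, while in degree $n$ there is a Galois-equivariant decomposition $H^n_{\et}(X, \mathbb{Q}_\ell) \cong H^n_{\et}(\mathbb{P}^n, \mathbb{Q}_\ell) \oplus H^n_{\textup{prim}}(X, \mathbb{Q}_\ell)$. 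Applying the trace formula to both $X$ and $\mathbb{P}^n$ and subtracting reduces the estimate to
\[|X(\mathbb{F}_q)| - |\mathbb{P}^n(\mathbb{F}_q)| = (-1)^n \tr\bigl(\Fr_q \mid H^n_{\textup{prim}}(X_{\overline{\mathbb{F}}_q}, \mathbb{Q}_\ell)\bigr).\]

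By Deligne's proof of the Weil conjectures, the eigenvalues of $\Fr_q$ on $H^n_{\textup{prim}}$ all have complex absolute value $q^{n/2}$, so the right-hand side is bounded by $b_n(X_{\overline{\mathbb{F}}_q}) \cdot q^{n/2}$. To pass from the special fiber to the analytification of the lift, I would apply smooth proper base change to $\tilde{X}$ at a prime above $q$ together with Artin's comparison theorem between $\ell$-adic and singular cohomology, which together yield $b_n(X_{\overline{\mathbb{F}}_q}) = b_n(\tilde{X}^{an})$. Finally, I would invoke Gromov's Betti number theorem: for any closed connected Riemannian $m$-manifold with sectional curvature $\geq -\kappa^2$ and diameter $\leq D$, the total Betti number satisfies $\sum_i b_i \leq C(m)^{1+\kappa D}$ for a constant $C(m)$ depending only on $m$. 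Applied to the real $2n$-manifold $\tilde{X}^{an}$, this yields $b_n(\tilde{X}^{an}) \leq C(2n)^{1+\kappa D}$, giving the main estimate with $C = C(2n)$. For the final assertion, since $|\mathbb{P}^n(\mathbb{F}_q)| \geq q^n$, whenever $q^{n/2} > C^{1+\kappa D}$, i.e.\ $q > K^{1+\kappa D}$ with $K = C^{2/n}$, the estimate forces $|X(\mathbb{F}_q)| \geq 1$.

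The main obstacle I anticipate is less any single step than bookkeeping the compatibility of the various comparison maps under the hypotheses of the statement: one must choose the lift carefully enough that smooth proper base change applies at the prime above $q$, choose $\ell$ coprime to $q$ and to the primes of bad reduction, and verify that the primitive-cohomology splitting is Galois-equivariant so the trace decomposition above is valid. Once these bookkeeping points are checked, the theorem follows by stringing together Grothendieck-Lefschetz, weak Lefschetz, Deligne's purity, and Gromov's Betti bound.
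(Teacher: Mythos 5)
Your proposal is correct and runs on the same engine as the paper: a Lefschetz-type cohomological identification with $\mathbb{P}^n$ away from the middle degree, the Grothendieck--Lefschetz trace formula, Deligne's purity, smooth proper base change plus Artin comparison to transfer Betti numbers to $\tilde{X}^{an}$, and Gromov's curvature--diameter Betti bound (with geometric connectedness invoked so Gromov applies). The only substantive difference is bookkeeping. You use weak Lefschetz together with Poincar\'e duality to match $H^i_{\et}(X)$ with $H^i_{\et}(\mathbb{P}^n)$ for \emph{all} $i\neq n$ and then pass to a Galois-equivariant primitive decomposition in the middle degree, obtaining the exact identity $|X(\mathbb{F}_q)|-|\mathbb{P}^n(\mathbb{F}_q)|=(-1)^n\tr\bigl(\Fr_q\mid H^n_{\textup{prim}}\bigr)$ and the clean bound $b_n\cdot q^{n/2}$. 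The paper instead quotes Katz's lemma (stated for possibly singular complete intersections, since it is also needed for Theorem~\ref{B}) to identify cohomology only in degrees $>n$, and then estimates the remaining traces in degrees $\leq n$ by the cruder triangle inequality $\bigl(\sum_{i\leq n}h^i+(n+1)\bigr)q^{n/2}$; both error terms are ultimately swallowed by $C^{1+\kappa D}$, so the final statement is the same. Your reduction to primitive middle cohomology is a tidier way to organize the same estimate in the smooth case.
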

\begin{remark}
It is easy to see that $|\mathbb{P}^n(\mathbb{F}_q)|=1+q+\hdots+q^n$.
\end{remark}
\begin{remark}Note that without restrictions on the geometry, Theorem~\ref{A} is not true even for curves. Indeed, there is no constant $C$ such that every smooth projective curve over $\mathbb{Q}$ satisfies
\[||X(\mathbb{F}_q)|-|\mathbb{P}^1(\mathbb{F}_q)||\leq Cq^{1/2},\]
where $p$ is any prime of good reduction. Indeed, let $q$ be a power of a prime $p$, and consider the smooth projective Fermat curve $X_q:=\textup{Proj}\left(\frac{\mathbb{Q}[X,Y,Z]}{(X^{q+1}+Y^{q+1}-Z^{q+1})}\right)$. These have good reduction at $p$. It is known that over $\mathbb{F}_{q^2}$, $X_q$ ("basechanged" to $\mathbb{F}_q$) satisfies the Weil bound. Indeed, over $\mathbb{F}_{q^2}$, these curves have genus $g=q(q-1)/2$, and have $1+q^3$ points. Consequently,
\[\frac{||X_q(\mathbb{F}_{q^2})|-|\mathbb{P}^1(\mathbb{F}_{q^2})||}{q}=\frac{|(1+2g(-q)+q^2)-(1+q^2)|}{q}=2g\rightarrow\infty\]
as $g\rightarrow\infty$ (equivalently, as $q\rightarrow\infty$).
\end{remark}
As a corollary, we obtain the following.
\begin{corollary}\label{corA}
For each $n\geq 1$, there is a constant $C=C(n)$ such that the following is true. Suppose $X$ is an $n$-dimensional smooth projective complete intersection over $\mathbb{F}_q$ with a characteristic zero lift $\tilde{X}$ whose generic fiber is a geometrically connected smooth projective variety. Furthermore, suppose $\tilde{X}^{an}$ admits a Riemannian metric with non-negative sectional curvature. Then there is a constant $K=K(n)$ depending only on $n$ such that $X$ has an $\mathbb{F}_q$-point if $q>K$.
\end{corollary}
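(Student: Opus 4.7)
The plan is to deduce Corollary~\ref{corA} directly from Theorem~\ref{A} by observing that the non-negativity of sectional curvature allows us to take $\kappa=0$, which eliminates the exponential dependence on $\kappa D$ in the main estimate.

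First I would note that since $\tilde{X}$ is a smooth projective complex variety, its analytification $\tilde{X}^{an}$ is a compact real manifold, so any Riemannian metric on it automatically has finite diameter $D<\infty$. The hypothesis that the sectional curvature is non-negative is equivalent to saying the sectional curvature is bounded below by $-\kappa^2$ with $\kappa=0$, so Theorem~\ref{A} applies with $\kappa=0$. The product $\kappa D$ therefore equals $0$ regardless of the actual diameter, and the estimate of Theorem~\ref{A} collapses to
\[\bigl||X(\mathbb{F}_q)|-|\mathbb{P}^n(\mathbb{F}_q)|\bigr|\leq C(n)\,q^{n/2},\]
where $C(n)$ is the constant from Theorem~\ref{A} applied in dimension $n$.

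Next I would use $|\mathbb{P}^n(\mathbb{F}_q)|=1+q+\cdots+q^n\geq q^n$ together with the reverse triangle inequality to get
\[|X(\mathbb{F}_q)|\geq q^n-C(n)\,q^{n/2}=q^{n/2}\bigl(q^{n/2}-C(n)\bigr).\]
Choosing $K(n)=C(n)^{2}+1$ (or any constant strictly greater than $C(n)^{2/n}$), whenever $q>K(n)$ the right-hand side is strictly positive and so $|X(\mathbb{F}_q)|\geq 1$, giving the desired $\mathbb{F}_q$-point.

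Since the argument is a direct specialization of Theorem~\ref{A} to the case $\kappa=0$, there is no genuine obstacle beyond verifying that the hypotheses of Theorem~\ref{A} are met, in particular the finiteness of $D$, which follows from compactness of $\tilde{X}^{an}$.
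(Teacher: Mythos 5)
Your proposal is correct and follows the same route the paper takes: Corollary~\ref{corA} is a direct specialization of Theorem~\ref{A} to $\kappa=0$, with the compactness of $\tilde{X}^{an}$ guaranteeing $D<\infty$ so that $\kappa D=0$. In fact, the final clause of Theorem~\ref{A} already gives an $\mathbb{F}_q$-point whenever $q>K^{1+\kappa D}=K$, so the last estimate you wrote out is not even needed.
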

The second arithmetic theorem is for smooth $\mathbb{Q}$-varieties that are not necessarily projective. It is not a generalization of Theorem~\ref{A} because it assumes the metric is complete and of nonnegative sectional curvature.
\begin{theorem}\label{B}
For each dimension $n\geq 1$, there is a constant $C=C(n)$ such that the following is true. Suppose we have an $n$-dimensional projective $\mathbb{Q}$-variety $\overline{X}$ with a hyperplane section $D$ such that $X:=\overline{X}\setminus D$ is geometrically connected and smooth and such that $X^{an}$ admits a complete Riemannian metric of nonnegative sectional curvature. Furthermore, suppose $p$ is a prime of good reduction for $X$ and that $\overline{X}_{\overline{\mathbb{F}}_q}$ is a complete intersection, not necessarily smooth. Then
\[||X(\mathbb{F}_q)|-q^n|\leq Cq^{\frac{n+d+1}{2}},\]
where $d$ is the dimension of the complement of the largest open subset of $\overline{X}_{\overline{\mathbb{F}}_q}$ on which $\overline{X}_{\overline{\mathbb{F}}_q}$ is smooth and $D_{\overline{\mathbb{F}}_q}$ is a smooth divisor ($d=-1$ if the complement is empty).
\end{theorem}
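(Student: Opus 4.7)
The plan is to combine the Grothendieck--Lefschetz trace formula for compactly supported $\ell$-adic cohomology, Gromov's bound on Betti numbers of complete manifolds of nonnegative sectional curvature, and a weight estimate for singular projective complete intersections.

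Since $p$ is of good reduction for $X$, the fiber $X_{\mathbb{F}_q}$ is smooth and geometrically connected of dimension $n$, so the trace formula yields
\[|X(\mathbb{F}_q)| = \sum_{i=0}^{2n}(-1)^i \tr\bigl(\Fr_q \mid H^i_c(X_{\overline{\mathbb{F}}_q}, \mathbb{Q}_\ell)\bigr),\]
with $H^{2n}_c \cong \mathbb{Q}_\ell(-n)$ producing the main term $q^n$. Applying Gromov's theorem to the $2n$-real-dimensional complete Riemannian manifold $X^{an}$ of nonnegative sectional curvature gives a constant $C_0(n)$ with $\sum_i b_i(X^{an}) \leq C_0(n)$; smooth base change (available since $p$ is of good reduction for $X$), Artin's comparison, and Poincar\'e duality for the smooth $X$ then transport this into the uniform bound $\dim H^i_c(X_{\overline{\mathbb{F}}_q}, \mathbb{Q}_\ell) \leq C_0(n)$ for all $i$.

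For the weights, since $D$ is a hyperplane section of $\overline{X}$, the complement $X$ is affine, so $H^i_c(X) = 0$ for $i < n$. For $n \leq i < 2n$, the long exact sequence of the pair $(\overline{X}_{\overline{\mathbb{F}}_q}, D_{\overline{\mathbb{F}}_q})$,
\[\cdots \to H^i_c(X_{\overline{\mathbb{F}}_q}) \to H^i(\overline{X}_{\overline{\mathbb{F}}_q}) \to H^i(D_{\overline{\mathbb{F}}_q}) \to H^{i+1}_c(X_{\overline{\mathbb{F}}_q}) \to \cdots,\]
realizes $H^i_c(X)$ as an extension of a subspace of $H^i(\overline{X}_{\overline{\mathbb{F}}_q})$ by a quotient of $H^{i-1}(D_{\overline{\mathbb{F}}_q})$. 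Since $\overline{X}_{\overline{\mathbb{F}}_q}$ and $D_{\overline{\mathbb{F}}_q}$ are projective complete intersections whose non-smooth loci are contained in a subscheme of dimension $\leq d$, a Lefschetz-type weight analysis --- using a proper alteration $\pi \colon \widetilde{Y} \to Y$ (de Jong) with $\widetilde{Y}$ smooth projective and observing that the cone of $\mathbb{Q}_\ell \to R\pi_* \mathbb{Q}_\ell$ is supported on a scheme of dimension $\leq d$ with Frobenius weights $\leq 2d$ by Deligne's Weil II --- shows that the part of $H^j(Y)$ not coming from $H^j(\mathbb{P}^{\dim Y})$ has Frobenius weights at most $n + d + 1$. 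The ``$\mathbb{P}^n$-parts'' of $H^j(\overline{X}_{\overline{\mathbb{F}}_q})$ and $H^j(D_{\overline{\mathbb{F}}_q})$ cancel in the long exact sequence exactly as in the computation for $\mathbb{A}^n = \mathbb{P}^n \setminus \mathbb{P}^{n-1}$, leaving only the main contribution $q^n$; hence $H^i_c(X)$ has Frobenius weights at most $n + d + 1$ for $i < 2n$. Combining with the Betti bound yields
\[\bigl||X(\mathbb{F}_q)| - q^n\bigr| \leq 2n \cdot C_0(n) \cdot q^{(n+d+1)/2}.\]

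The main obstacle is the weight estimate: obtaining the sharp exponent $n+d+1$, rather than the naive $2n-1$ implied by smoothness of $X$ alone, requires the alteration-based weight analysis of the singular projective complete intersections $\overline{X}_{\overline{\mathbb{F}}_q}$ and $D_{\overline{\mathbb{F}}_q}$ combined with Deligne's Weil II, together with the careful verification that the ``$\mathbb{P}^{\dim}$-parts'' of $H^\bullet(\overline{X}_{\overline{\mathbb{F}}_q})$ and $H^\bullet(D_{\overline{\mathbb{F}}_q})$ genuinely cancel in the long exact sequence of the pair, leaving only the main term $q^n$ and a lower-weight remainder.
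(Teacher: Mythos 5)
Your overall framework (Grothendieck--Lefschetz trace formula + Gromov's Betti bound + cohomological input) matches the paper's, but your key cohomological step does not work, and it replaces the one genuine technical ingredient of the paper with a sketch that has a gap.

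The paper's proof hinges on Sawin's vanishing theorem (Lemma~\ref{Sawinlemma}): for $X=\overline{X}\setminus D$ with $\overline{X}$ a projective complete intersection and $Z$ the complement of the largest open where $\overline{X}$ is smooth \emph{and} $D$ is a smooth divisor, one has $H^i_{c,\text{\'et}}(X;\mathbb{Q}_\ell)=0$ for $n+d+1<i<2n$ and $H^{2n}_c=\mathbb{Q}_\ell(-n)$. Combined with the trivial Weil~II bound (weight $\leq i$) on the surviving low-degree groups, this yields the exponent $(n+d+1)/2$. You instead try to establish a \emph{weight} bound of $n+d+1$ on all $H^i_c(X)$ with $i<2n$, via de~Jong alterations. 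This is the wrong invariant: for smooth $X$, if $H^i_c(X)\neq 0$ with $i$ close to $2n$, Poincar\'e duality forces its weight to be at least $2i-2n$, which exceeds $n+d+1$ as soon as $i>\frac{3n+d+1}{2}$. So a uniform weight bound of $n+d+1$ in degrees $i<2n$ can only hold because those groups \emph{vanish} in the range $n+d+1<i<2n$; the content is a vanishing theorem, not a weight estimate, and your method does not establish the vanishing.

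The alteration sketch itself also does not go through: an alteration $\pi\colon\widetilde{Y}\to Y$ need not be an isomorphism over the smooth locus of $Y$, so the cone of $\mathbb{Q}_\ell\to R\pi_*\mathbb{Q}_\ell$ is not supported on a $d$-dimensional scheme; even if it were, weight $\leq 2d$ for its hypercohomology does not follow directly and, in any case, would not yield the exponent $n+d+1$. The ``$\mathbb{P}^n$-part cancellation'' in the long exact sequence of the pair is plausible but is precisely the nontrivial claim that needs proof; as stated it presupposes Katz's range of vanishing rather than deriving it. You also define $d$ only in terms of the singular locus of $\overline{X}_{\overline{\mathbb{F}}_q}$, whereas the theorem's $d$ must also account for where $D_{\overline{\mathbb{F}}_q}$ fails to be a smooth divisor --- exactly the locus $Z$ appearing in Sawin's lemma. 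Finally, a minor point: Gromov's theorem (\ref{Gromov}) is stated for \emph{closed} manifolds, so to apply it to the complete noncompact $X^{an}$ you need the Cheeger--Gromoll soul theorem to reduce to a closed manifold of nonnegative curvature; the paper does this explicitly and your proposal omits it.
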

On the transcendental side, we have the following analogue of the above arithmetic results. In the following, $f_{n,q}:\mathbb{P}^n\rightarrow\mathbb{P}^n$ is given by $[x_0:\hdots:x_n]\mapsto [x_0^q:\hdots:x_n^q]$, where $q$ is any positive integer (as opposed to a prime power in the arithmetic cases), and $\Lambda(-)$ denotes the Lefschetz number (see the next section for a definition).
\begin{theorem}\label{C}For each $n\geq 1$, there is a constant $C=C(n)$ such that the following is true. Suppose $X$ is any $n$-dimensional smooth projective complete intersection $X$ over $\mathbb{C}$. Furthermore, suppose it is equipped with a Riemannian metric whose sectional curvature is at least $-\kappa^2$, $\kappa\geq 0$, and whose diameter is at most $D$. Then for each algebraic endomorphism $f:X\rightarrow X$ such that $f^{-1}E$ is algebraically equivalent to $qE$ for some ample divisor $E$ and for some integer $q>0$, 
\[|\Lambda(f)-\Lambda(f_{n,q})|\leq C^{1+\kappa D}q^{n/2}.\]
In particular, there is a constant $\alpha=\alpha(n)$, depending only on the dimension $n$, such that each $f$ as above with $q\geq 2$ has a periodic point of period $\leq \alpha(1+\kappa D)$.
\end{theorem}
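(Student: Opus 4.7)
The plan is to follow the same template as Theorem~\ref{A}, but replacing the Grothendieck trace formula and Deligne's Weil~II bounds by the classical Lefschetz fixed-point theorem together with Serre's Kählerian analogue of the Weil conjectures, and retaining Gromov's Betti number bound as the source of the factor $C^{1+\kappa D}$.

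First, I would apply the Lefschetz fixed-point theorem to express
\[\Lambda(f)=\sum_{i=0}^{2n}(-1)^i\tr\bigl(f^*\mid H^i(X;\mathbb{Q})\bigr),\qquad \Lambda(f_{n,q})=\sum_{i=0}^{2n}(-1)^i\tr\bigl(f_{n,q}^*\mid H^i(\mathbb{P}^n;\mathbb{Q})\bigr).\]
Since $X$ is a smooth projective complete intersection of dimension $n$, the Lefschetz hyperplane theorem (combined with hard Lefschetz and Poincar\'e duality) gives $H^i(X;\mathbb{Q})\cong H^i(\mathbb{P}^n;\mathbb{Q})$ for $i\neq n$, generated by powers of the hyperplane class $h$. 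The hypothesis $f^{-1}E\sim_{\textup{alg}} qE$ with $E$ ample forces $f^*h=q\cdot h$ in $H^2(X;\mathbb{Q})$ (after identifying cohomology classes of algebraically equivalent divisors), hence $f^*h^i=q^i h^i$, which matches the action of $f_{n,q}^*$ on $H^{2i}(\mathbb{P}^n)$. Therefore, all contributions outside the middle degree cancel in the difference $\Lambda(f)-\Lambda(f_{n,q})$. In the middle degree, $H^n(X;\mathbb{Q})$ decomposes (when $n$ is even) as the line $\mathbb{Q}\cdot h^{n/2}$ plus the primitive part $P^n(X;\mathbb{Q})$, and on the former $f^*$ acts by $q^{n/2}$, again matching $f_{n,q}^*$; when $n$ is odd, $H^n(\mathbb{P}^n)=0$, and $H^n(X;\mathbb{Q})$ is entirely primitive.

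Consequently, one is reduced to bounding $|\tr(f^*\mid P^n(X;\mathbb{C}))|$. Here I would invoke Serre's theorem (the Kählerian analogue of the Weil Riemann hypothesis,~\cite{Serre}): since $f^{-1}E$ is algebraically, hence numerically, equivalent to $qE$ for the ample divisor $E$, every eigenvalue of $f^*$ on $H^i(X;\mathbb{C})$ has absolute value $q^{i/2}$. Thus
\[|\tr(f^*\mid P^n(X;\mathbb{C}))|\leq \dim P^n(X)\cdot q^{n/2}\leq b_n(X)\cdot q^{n/2}\leq\Bigl(\sum_{i}b_i(X)\Bigr)\cdot q^{n/2}.\]
Gromov's Betti number bound for a closed Riemannian $2n$-manifold with $\textup{sec}\geq -\kappa^2$ and $\textup{diam}\leq D$ then gives $\sum_i b_i(X)\leq C(n)^{1+\kappa D}$ for some constant $C(n)$ depending only on $n=\dim_{\mathbb{C}}X$, yielding the desired estimate.

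For the periodic-point statement, I would apply the displayed inequality to the iterates $f^k$, noting that $(f^k)^{-1}E$ is algebraically equivalent to $q^k E$. This gives
\[\Lambda(f^k)\geq \Lambda(f_{n,q^k})-C^{1+\kappa D}q^{kn/2}=1+q^k+\hdots+q^{kn}-C^{1+\kappa D}q^{kn/2},\]
which is strictly positive once $q^{kn/2}>C^{1+\kappa D}$, i.e., once $k>2(1+\kappa D)\log C/(n\log q)$. Since $q\geq 2$, choosing $\alpha=\alpha(n)$ slightly larger than $2\log C/(n\log 2)$ produces a fixed point of $f^k$, i.e., a periodic point of $f$ of period at most $\alpha(1+\kappa D)$. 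The main obstacle I anticipate is the careful bookkeeping of the decomposition of $H^*(X;\mathbb{Q})$ into primitive and non-primitive parts and verifying that the hypothesis $f^{-1}E\sim_{\textup{alg}}qE$ is strong enough (rather than just numerical equivalence) to pin down the action of $f^*$ on the non-primitive classes; once this is in place, the rest is an application of Serre's eigenvalue estimate and Gromov's Betti number bound in parallel with the proof of Theorem~\ref{A}.
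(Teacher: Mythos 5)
Your proof is correct and follows essentially the same route as the paper: the Lefschetz fixed-point theorem, the Lefschetz hyperplane theorem (Lemma~\ref{complexlemma}) to identify the cohomology away from the middle degree, Serre's eigenvalue bound (Theorem~\ref{Serrethm}), and Gromov's Betti number estimate (Theorem~\ref{Gromov}), with iterates of $f$ giving the periodic point statement. The only cosmetic difference is that you also cancel the $i<n$ contributions against $\Lambda(f_{n,q})$ before estimating, whereas the paper identifies only the degrees $i>n$ and bounds all degrees $i\leq n$ directly via Serre's theorem; both yield the same $C^{1+\kappa D}q^{n/2}$ bound.
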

\begin{remark}
It is easy to see that $\Lambda(f_{n,q})=1+q+\hdots+q^n$. This is because we know that the cohomology of $\mathbb{P}^n$ is generated by the powers of the hyperplane section.
\end{remark}
\begin{remark}
Note that not all smooth projective complex curves are complete intersections. Indeed, the genus of a degree $(d_1,\hdots,d_n)$ smooth complete intersection in $\mathbb{P}^{n+1}$ (that is a curve) is 
\[g=1+\frac{1}{2}d_1\hdots d_n(-n-2+\sum_id_i).\] 
Consequently, not all genera can appear among the class of smooth projective curves that are complete intersections. For example, it is easy to see from this formula that no such complex curve can have genus $g=2$.
\end{remark}
\begin{remark}
Note that the last part of Theorem~\ref{C} is not true if $q$ is permitted to be $1$. For example, let
\[X_k:=\textup{Proj}\left(\frac{\mathbb{C}[x_0,\hdots,x_n]}{(x_0^{k^n}+\hdots+x_n^{k^n})}\right),\]
and let
\[f_k:X_k\rightarrow X_k\]
be given by
\[[x_0:\hdots:x_k]\mapsto [x_0:\zeta_{k^n}x_1:\hdots:\zeta_{k^n}^nx_n],\]
where $\zeta_{k^n}$ is a primitive $k^n$-th root of unity. Note that each $X_k$ is an $n$-dimensional smooth projective complex variety, and that $f_k$ has the property that
\[f_k^{\circ m}[x_0:\hdots:x_k]=[x_0:\zeta_{k^n}x_1:\hdots:\zeta_{k^n}^{nm}x_n].\]
Since $\zeta_{k^n}^{(j-i)m}\neq 1$ if $k^n\nmid (j-i)m$, $f_k^{\circ m}$ does not have a fixed-point when $m<k^n/n$. Consequently, the minimal period of $f_k$ diverges as $k\rightarrow\infty$.
\end{remark}
\begin{remark}
Note that there is no constant $C$, depending only on $n$, such that for each endomorphism $f$ of any $n$-dimensional smooth projective complex variety, the Lefschetz number $\Lambda(f)$ satisfies
\[|\Lambda(f)-\Lambda(f_{n,q})|\leq Cq^{n/2}.\]
For example, take a genus $g\geq 2$ smooth projective complex curve $\Sigma_g$ (can be endowed with a Riemannian metric of constant negative sectional curvature $-1$) and endomorphisms $\id_{\Sigma_g}:\Sigma_g\rightarrow\Sigma_g$. Then $q=1$ and $\Lambda(\id_{\Sigma_g})=2-2g$. Consequently,
\[\frac{|\Lambda(\id_{\Sigma_g})-1-q|}{q^{1/2}}=2g\rightarrow\infty\]
as $g$ tends to $\infty$. 
\end{remark}
\begin{remark}
Examples of endomorphisms of projective complex varieties with endomorphisms of degree at least $2$ are $\mathbb{P}^n$ and abelian varieties. These are essentially all known smooth examples. By a theorem of Kobayashi and Ochiai from 1975, the number of surjective meromorphic mappings of a compact complex space to a compact complex space of general type is finite \cite{KobayashiOchiai}. In particular, there are no algebraic endomorphisms of degree at least $2$ of smooth projective complex varieties of general type. Of course, this is not true in positive characteristic; the Frobenius endomorphism gives us such endomorphisms. Furthermore, in \cite{Beauville}, Beauville proves that there are no endomorphisms of degree at least $2$ of a smooth hypersurface of degree at least $3$ and dimension at least $2$.
\end{remark}
\begin{remark}
There are no continuous endomorphisms of complete oriented closed connected negatively curved Riemannian manifolds of degree at least $2$ and of finite volume. Indeed, suppose there were such an endomorphism $f:M\rightarrow M$. By Thurston's inequality (see Section 1.2 of Gromov in \cite{Gromov2}), the simplicial volume $|\!|M|\!|$ of $M$ is positive. On the other hand, $|\deg f||\!|M|\!|\leq |\!|M|\!|$ from the definition of simplicial volume (see Section 0.2 of Gromov in \textit{loc.cit}). If $|\deg f|>1$, then these two facts lead to a contradiction. This theorem of Thurston was pointed out to me by Mehdi Yazdi.
\end{remark}
\begin{remark}\label{connectivityremark}
Note that complete intersections of \textit{positive} dimension are (geometrically) connected. See Exercise 18.6.T of \cite{Vakil}, for example.
\end{remark}
\textit{Acknowledgment. }The author is thankful to Mehdi Yazdi for useful discussions and comments. The work of the author during the writing of this paper was supported by SFB1085: Higher Invariants.
\section{Preliminaries}
Before proving our main results, we discuss here the ingredients that will go into proving them. Precisely, after defining some basic relevant notions in Riemannian geometry, we discuss the Lefschetz and Grothendieck fixed-point theorems, the Weil conjectures and Serre's characteristic zero analogue for smooth projective complex varieties, arithmetic and differential geometric theorems bounding from above the sum of Betti numbers, and prove a small lemma about the cohomology groups of complex smooth projective complete intersections.
\subsection{Riemannian Geometry}
For us, the only relevant concepts from Riemannian geometry are sectional curvature and diameter. We briefly define these two concepts. Since this material is standard, we do not give precise citations; see any book on Riemannian geometry, for example Kobayashi and Nomizu's \cite{KN}, for details.\\
\\
Given a manifold $M$ with a vector bundle $E$ above it, we have the notion of connections $D:\cal{T}(M)\otimes\Gamma(E)\rightarrow\Gamma(E)$ on the vector bundle $E$, where $\cal{T}(M)$ denotes vector fields on $M$ and $\Gamma(E)$ denotes global sections of $E$. Its value at $(X,\mu)\in\cal{T}(M)\times\Gamma(E)$ is denoted by $D_X\mu$. A connection is, by definition, one such map that is $C^{\infty}(M)$-linear in $X$ and satisfies a Leibniz rule in $\mu$. Now suppose $M$ is endowed with a metric $g$. Though we do not define what it means for a connection on the Riemannian manifold $(M,g)$ to be symmetric, torsion-free, or compatible with the metric, there is a unique symmetric and torsion-free connection $\nabla$ on (the tangent bundle $TM$ of) $M$ that is compatible with $g$. This is called the Levi-Civita connection. Henceforth, given a vector bundle $V$ on $M$, $\Omega^p(V):=\Omega^p(M)\otimes \Gamma(V)$. Any connection $D$ on a vector bundle $E$ over a manifold $M$ may also be viewed as a map
\[D:\Omega^0(E):=\Gamma(E)\rightarrow \Omega^1(M)\otimes\Gamma(E)=\Omega^1(E)\]
This gives us an operator
\[F:=D^2:\Omega^0(E)\rightarrow\Omega^2(E),\]
called the curvature (endomorphism) of $D$. By duality, this can be viewed as an element of $\Omega^2(\End(E))$, hence the name curvature \textit{endomorphism}. With this identification, $F$ may be viewed as $\mu\mapsto R(.,.)\mu$, where $R(.,.)\in \Omega^2(\End(E))$. It can be shown that $R(.,.)$ is given by 
\[R(X,Y)\mu=D_XD_Y\mu-D_YD_X\mu-D_{[X,Y]}\mu,\]
for every pair of vector fields $X,Y$ and every global section $\mu\in\Gamma(E)$. Let us now restrict to the Levi-Civita connection $\nabla$ with $\Rm$ its curvature endomorphism. This gives us a $4$-tensor which we denote by $(X,Y,Z,W)\mapsto\Rm(X,Y,Z,W):=\left<\Rm(X,Y)Z,W\right>$, where $X,Y,Z,W$ are vector fields on $M$. Suppose $p\in M$ and $X,Y\in T_pM$ is a basis for a $2$-plane $\Pi\subseteq T_pM$. We define the \textit{sectional curvature} as
\[K_g(X,Y):=\frac{\Rm(X,Y,Y,X)}{|X|_g^2|Y|_g^2-\left<X,Y\right>_g^2}.\]
It can be shown that this only depends on the $2$-plane and not on the chosen basis $X,Y$. Therefore, for each $p\in M$, the sectional curvature is a map from the family of $2$-planes in $T_pM$ to $\mathbb{R}$. Intuitively, this quantity is a measure of how curved our manifold is at each point. We could also define the sectional curvature in a more geometric way and then prove the equality above as a theorem. Indeed, there is a geometric definition of the sectional curvature of a surface that associates to each point on a Riemannian surface a number measuring how curved the surface is at that point. For higher dimensional Riemannian manifolds and a given $2$-plane $\Pi\subseteq T_pM$, we can choose a neighbourhood $V$ of $0\in T_pM$ such that the restriction of the exponential map $\exp_p|_V:V\rightarrow M$ is a diffeomorphism onto its image. Then $S_{\Pi}:=\exp_p(V\cap\Pi)$ is a $2$-dimensional Riemannian submanifold of $M$ containing $p$ called the plane section determined by $\Pi$. Then we may apply the geometric version of sectional curvature to the plane section $S_{\Pi}$. This agrees with the above definition of sectional curvature.\\
\\
The definition of diameter is defined in the expected manner. For each piecewise smooth curve $\gamma:I\rightarrow M$ in $M$ defined on some interval $I$, we may define its length as the integral
\[L(\gamma):=\int_I|\gamma'(t)|_gdt.\]
Then the distance between two points $p,q\in M$ is defined as
\[d_g(p,q):=\inf_{\gamma:p\rightarrow q}L(\gamma),\]
where the infimum is taken over all piecewise smooth curves connecting $p$ to $q$. The \textit{diameter} $D_g$ of a Riemannian manifold $(M,g)$ is defined as the supremum of the distance function over all pairs of points in $M$. This concludes our discussion of the two main definitions from Riemannian geometry that will be of interest to us in this paper.
\subsection{Weil conjectures and Serre's theorem}
Before Deligne proved the Weil conjectures, Serre proved a characteristic zero analogue.
\begin{theorem}[Serre \cite{Serre}, 1960]\label{Serrethm}Let $V$ be a smooth projective complex variety, and let $f:V\rightarrow V$ be an endomorphism such that for some hyperplane section $E$, $f^{-1}E$ is algebraically equivalent to $qE$, $q>0$ some integer. Then every eigenvalue of $f^*:H^i(V;\mathbb{C})\rightarrow H^i(V;\mathbb{C})$ is of modulus $q^{i/2}$.
\end{theorem}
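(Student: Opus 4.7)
The plan is to leverage Hodge theory---specifically the hard Lefschetz theorem and the Hodge--Riemann bilinear relations---to reduce the statement to the fact that a unitary operator has eigenvalues of modulus one. Set $n=\dim V$, let $[E]\in H^2(V;\mathbb{C})$ denote the cohomology class of $E$, and write $L\colon H^i(V;\mathbb{C})\to H^{i+2}(V;\mathbb{C})$ for cup product with $[E]$. The hypothesis that $f^{-1}E$ is algebraically equivalent to $qE$ forces $f^*[E]=q[E]$ in cohomology, so $f^*L^k=q^kL^kf^*$ for all $k\geq 0$. Comparing top intersection numbers then yields $\deg(f)=q^n$.

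Next I would invoke the Lefschetz decomposition $H^i(V;\mathbb{C})=\bigoplus_{k\geq 0}L^kP^{i-2k}$, where $P^j\subseteq H^j(V;\mathbb{C})$ is the primitive cohomology (the kernel of $L^{n-j+1}$). Since $f^*$ commutes with powers of $L$ up to scalars, it preserves each primitive subspace, and the eigenvalues of $f^*$ on $L^kP^{i-2k}$ are exactly $q^k$ times those of $f^*$ on $P^{i-2k}$. Because $q^k\cdot q^{(i-2k)/2}=q^{i/2}$, it suffices to show that the eigenvalues of $f^*$ on $P^j$ have modulus $q^{j/2}$.

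To handle primitive cohomology, I would fix a K\"ahler metric on $V^{an}$ with K\"ahler class $[E]$. Because $f$ is algebraic (hence holomorphic) and $f^*$ on $H^*(V;\mathbb{C})$ is defined over $\mathbb{R}$, $f^*$ respects the Hodge decomposition $P^j=\bigoplus_{p+q=j}P^{p,q}$ and commutes with complex conjugation. The Hodge--Riemann bilinear relations provide, on each piece $P^{p,q}$, a positive-definite Hermitian form of the shape
\[H(\alpha,\beta)=i^{p-q}(-1)^{j(j-1)/2}\int_V L^{n-j}(\alpha\cup\overline{\beta}).\]
A short computation combining $f^*[E]^{n-j}=q^{n-j}[E]^{n-j}$, the top-degree identity $\int_V f^*\omega=q^n\int_V\omega$, and the projection formula gives $H(f^*\alpha,f^*\beta)=q^jH(\alpha,\beta)$. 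Hence $q^{-j/2}f^*$ is unitary on each $P^{p,q}$ and its eigenvalues lie on the unit circle, so the eigenvalues of $f^*$ on $P^j$ have modulus $q^{j/2}$, as required.

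The only nontrivial step is pinning down the scaling constant $q^j$: one must correctly juggle the identities $f^*\circ L=qL\circ f^*$, $\deg(f)=q^n$, and the compatibility of $f^*$ with complex conjugation. Once that is in hand, the conclusion is a formal consequence of the Lefschetz decomposition together with the positivity of the Hodge--Riemann form on primitive cohomology.
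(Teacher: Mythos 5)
The paper states Serre's theorem only as a citation and gives no proof of its own, so there is nothing in the source to compare against word for word; but your argument is correct and is in fact Serre's own Hodge-theoretic proof. All the essential verifications go through: $f^{-1}E\sim_{\mathrm{alg}}qE$ gives $f^*[E]=q[E]$ in $H^2$, hence $\deg f=q^n$ from $\int_V(f^*[E])^n=\deg(f)\int_V[E]^n$; the commutation rule $f^*\circ L=qL\circ f^*$ shows $f^*$ preserves each primitive piece $P^j=\ker L^{n-j+1}$, and holomorphy of $f$ preserves the $(p,q)$-decomposition; and the scaling computation $H(f^*\alpha,f^*\beta)=q^{-(n-j)}\deg(f)\,H(\alpha,\beta)=q^jH(\alpha,\beta)$ makes $q^{-j/2}f^*$ unitary for the Hodge--Riemann form on $P^{p,q}$, which is positive definite precisely because $[E]$ is a K\"ahler class. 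The passage from $P^{i-2k}$ to $L^kP^{i-2k}$ multiplies eigenvalues by $q^k$, and $q^k\cdot q^{(i-2k)/2}=q^{i/2}$ closes the Lefschetz decomposition. This is exactly the strategy of Serre's \emph{Analogues k\"ahl\'eriens}, so your blind reconstruction matches the cited source.
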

If we work over a finite field $\mathbb{F}_q$, then the Frobenius morphism pulls back the hyperplane section $E$ to a divisor algebraically equivalent to $qE$. Consequently, this theorem of Serre is precisely the characteristic zero analogue of the Riemann Hypothesis part of the Weil conjectures, proved by Deligne. In fact, Deligne proved the following more general theorem (in fact, even a relative version).
\begin{theorem}[Deligne \cite{Weil2}, 1980]Let $X_0$ be a scheme of finite type over $\mathbb{F}_q$, and let $\cal{F}_0$ be a $\overline{\mathbb{Q}}_{\ell}$-sheaf on $X_0$ which is $\tau$-mixed of weight $\leq w$. Let $X:=X_0\times_{\mathbb{F}_q}\overline{\mathbb{F}}_q$, and let $\cal{F}$ be the pullback of $\cal{F}_0$ to $X$. Then $H^i_{c,{\textup{\'et}}}(X;\cal{F})$ is $\tau$-mixed of weight $\leq w+i$.
\end{theorem}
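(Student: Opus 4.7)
The strategy is to reduce, by standard $\ell$-adic formalism, to the case of a smooth curve and then to execute Deligne's $L$-function bootstrap as in \emph{Weil~II}; the reductions are essentially formal, and the bulk of the difficulty is concentrated in the curve case.

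First I would establish the easy compatibilities: $\tau$-mixedness with a given weight bound is preserved by $j_!$ for $j$ an open immersion, by $i_*$ for $i$ a closed immersion, and by pullback, and it behaves predictably under short exact sequences of sheaves. Combined with the long exact sequence in $H^{\bullet}_{c,\et}$ attached to an open-closed decomposition, together with proper base change, this lets one replace $X_0$ by strata on which $\mathcal{F}_0$ is lisse and $X_0$ is smooth. Noetherian induction on $\dim X_0$ then reduces, via the Leray spectral sequence
\[E_2^{p,q}=H^p_{c,\et}(S;R^qf_!\mathcal{F})\Longrightarrow H^{p+q}_{c,\et}(X;\mathcal{F})\]
applied to a generically smooth fibration $f:X_0\to S_0$ of relative dimension one over a smooth base, to the case in which $X_0$ itself is a smooth geometrically connected curve and $\mathcal{F}_0$ is lisse on it; contributions from points of non-lisseness are absorbed by a further small devissage using the same open-closed long exact sequence.

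For $X_0$ a smooth curve I would study the $L$-function
\[L(X_0,\mathcal{F}_0,t)=\prod_{x\in|X_0|}\det\bigl(1-t^{\deg x}\Fr_x\mid\mathcal{F}_{0,\bar{x}}\bigr)^{-1},\]
which by the Grothendieck trace formula equals an alternating product of characteristic polynomials of the geometric Frobenius acting on $H^i_{c,\et}(X;\mathcal{F})$ for $i=0,1,2$. The desired weight bound amounts to controlling, under every embedding $\tau:\overline{\mathbb{Q}}_{\ell}\hookrightarrow\mathbb{C}$, the absolute values of the reciprocal roots of the $H^1$-factor by $q^{(w+1)/2}$. I would then tensor $\mathcal{F}_0$ with itself an even number $2k$ of times and, after producing a real lisse sheaf from this tensor power by Deligne's ``main lemma,'' run a Hadamard--de la Vall\'ee Poussin style positivity argument: the logarithmic derivative of the associated $L$-function has nonnegative Dirichlet coefficients, so combining this with the meromorphy and functional equation of Grothendieck's $L$-function forces any alleged reciprocal root of excessive absolute value to contradict the known domain of convergence of the Euler product. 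Letting $k\to\infty$ sharpens the resulting exponent to exactly $w+1$.

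The hard part, and essentially the whole content of \emph{Weil~II}, is the construction of the real lisse tensor-power sheaf together with the optimization of the positivity argument to yield the sharp bound $w+i$ rather than a weaker exponent. This requires non-trivial input from local monodromy theory, the Picard--Lefschetz formula at points of non-lisseness, and Grothendieck's functional equation for $L$-functions on curves; the remaining reductions and the translation between the trace formula and $L$-function positivity are formal in comparison.
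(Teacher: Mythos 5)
The paper does not prove this statement; it is quoted verbatim (as ``Deligne, Weil II, 1980'') and used as a black box, so there is no internal proof of the paper to compare against. What you have written is a high-level outline of Deligne's actual argument in \emph{La Conjecture de Weil.~II}, and as such it captures the main structural ingredients correctly: devissage via open--closed decompositions and Noetherian induction to reduce to lisse sheaves on smooth curves, passage through the Leray spectral sequence of a generically smooth fibration of relative dimension one, the Grothendieck trace formula expressing the $L$-function cohomologically, the Rankin-style even tensor power trick combined with the ``main lemma'' producing a real sheaf, and a convergence/positivity comparison between the Euler product and the rational cohomological expression, sharpened by letting $k\to\infty$.

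Two cautions are worth flagging. First, your invocation of the Picard--Lefschetz formula is a vestige of \emph{Weil~I}; in \emph{Weil~II} the decisive local input is the local monodromy filtration at the points of non-lisseness (and at $\infty$), together with Grothendieck's functional equation and the determinant of cohomology, rather than a Lefschetz-pencil argument. Second, the reduction to curves is not quite as formal as ``absorbed by a further small devissage'': one must handle the change of weight under $R^qf_!$ fiberwise and keep track of the generic rank and the weight filtration on the nearby cycles, which is where a significant portion of the technical work resides. None of this affects the correctness of your outline as a sketch, but since the paper simply cites the theorem, the appropriate conclusion is that your proposal is a reasonable precis of Deligne's proof rather than an alternative to anything in the paper.
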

We do not define $\tau$-mixed here; see Deligne's \cite{SGA45} and \cite{Weil2}. We note the following consequence. If $X_0$ is a smooth projective $\mathbb{F}_q$-variety, and $\cal{F}_0=\overline{\mathbb{Q}}_{\ell}$, then the above theorem along with Poincar\'e duality give us that $H^i_{\textup{\'et}}(X;\overline{\mathbb{Q}}_{\ell})$ is pure of weight $i$, that is, the geometric Frobenius has eigenvalues of modulus $q^{i/2}$. This is the Riemann Hypothesis over finite fields. If $X_0$ is not projective, then we can only say that the geometric Frobenius acting on $H^i_{c,\textup{\'et}}(X;\overline{\mathbb{Q}}_{\ell})$ has eigenvalues of modulus \textit{at most} $q^{i/2}$.
\subsection{fixed-point theorems}
Suppose $X$ is a topological space with finitely generated singular cohomology groups with coefficients in $\mathbb{C}$, only finitely many of which are nonzero. The \textit{Lefschetz number} $\Lambda(f)$ of a continuous endomorphism $f:X\rightarrow X$ is defined to be
\[\Lambda(f):=\sum_i(-1)^i\tr(f^*:H^i_c(X;\mathbb{C})\rightarrow H^i_c(X;\mathbb{C})).\] 
\begin{theorem}[Lefschetz fixed-point theorem] Let $X$ be a compact triangulable topological space, and let $f:X\rightarrow X$ be a continuous endomorphism. Then if $\Lambda(f)\neq 0$, $f$ has a fixed-point.
\end{theorem}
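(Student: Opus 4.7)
The plan is to prove the contrapositive, following the classical Lefschetz--Hopf argument: assume $f$ has no fixed-point and conclude $\Lambda(f)=0$. The argument breaks into three steps: a reduction to a simplicial setting in which every simplex avoids its own image, an algebraic computation of the Lefschetz number at the chain level, and a combinatorial vanishing.

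For the first step, I would fix a triangulation of $X$ together with a compatible metric $d$. Since $f$ has no fixed-points and $X$ is compact, the continuous function $x\mapsto d(x,f(x))$ attains a positive minimum $\varepsilon>0$. By uniform continuity of $f$, iterated barycentric subdivision produces a triangulation of $X$ of mesh smaller than $\varepsilon/3$. The simplicial approximation theorem then furnishes a simplicial map $g:X\to X$ homotopic to $f$ with the additional property that for every closed simplex $\sigma$, the image $g(\sigma)$ is disjoint from $\sigma$. Because $g\simeq f$, we have $\Lambda(g)=\Lambda(f)$ by homotopy invariance of singular cohomology.

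For the second step, I would invoke the Hopf trace formula: for a chain endomorphism $g_{\#}:C_{\bullet}(X;\mathbb{C})\to C_{\bullet}(X;\mathbb{C})$ on the finite-dimensional simplicial chain complex of our triangulation,
\[\sum_i(-1)^i\tr(g_{\#,i})\;=\;\sum_i(-1)^i\tr(g_{*,i})\;=\;\Lambda(g),\]
where $g_*$ is the induced map on homology. This is a purely algebraic identity, obtained by applying additivity of trace to the short exact sequences linking chains, cycles, and boundaries, and it relies only on finite generation of the chain groups (which is where triangulability is essential).

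Finally, the combinatorial punchline: in the basis of oriented simplices, the matrix of $g_{\#}$ on $C_i(X;\mathbb{C})$ has zero diagonal entries, since the coefficient of $\sigma$ in $g_{\#}(\sigma)$ vanishes whenever $g(\sigma)\cap\sigma=\emptyset$. Hence each $\tr(g_{\#,i})=0$, and we conclude $\Lambda(f)=\Lambda(g)=0$, contradicting $\Lambda(f)\neq 0$. The step that requires the most care is the simplicial approximation: one must arrange not merely uniform closeness of $g$ to $f$, but also that each simplex lands in a subcomplex disjoint from itself. This is precisely the place where compactness and the fixed-point-free hypothesis interact to force the conclusion.
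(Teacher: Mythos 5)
The paper states the Lefschetz fixed-point theorem as a classical ingredient and gives no proof, so there is nothing in the paper to compare your argument against; the review below is on its own terms. Your route is the standard Lefschetz--Hopf argument, and the algebraic core (the Hopf trace formula in Step 2 and the zero-diagonal observation in Step 3) is the right idea. However, Step 1 contains a genuine gap: the simplicial approximation theorem does \emph{not} produce a simplicial self-map $g:X\to X$ of a single fixed triangulation. What it produces, after iterating barycentric subdivision of a triangulation $K$ to a finer one $K'$, is a simplicial map $g:K'\to K$ homotopic to $f$. Consequently $g_{\#}$ is a chain map $C_{\bullet}(K')\to C_{\bullet}(K)$ between two different (though quasi-isomorphic) complexes, not an endomorphism of a single finitely generated chain group, and it has no ``diagonal entries'' as your Step 3 requires. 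The standard repair is to compose with the subdivision chain map $\lambda:C_{\bullet}(K)\to C_{\bullet}(K')$ and apply the Hopf trace formula to the chain endomorphism $g_{\#}\circ\lambda$ of $C_{\bullet}(K)$, which is chain homotopic to a map inducing $f_{*}$ on homology.

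The combinatorial punchline must then be restated: one shows that for every oriented simplex $\sigma$ of $K$, the coefficient of $\sigma$ in $(g_{\#}\circ\lambda)(\sigma)$ vanishes. Since $\lambda(\sigma)$ is a signed sum of simplices $\tau$ of $K'$ contained in $\sigma$, it suffices that $g(\tau)\neq\sigma$ for each such $\tau$; this is where the metric estimates enter. Take the mesh of $K$ below $\varepsilon/3$. If $g(\tau)=\sigma$, then some vertex $v$ of $\tau$ has $g(v)\in\sigma$ and $v\in\sigma$, so $d(v,g(v))\leq\operatorname{diam}\sigma<\varepsilon/3$; on the other hand, the simplicial approximation condition places $f(v)$ and $g(v)$ in a common closed simplex of $K$, so $d(v,g(v))\geq d(v,f(v))-d(f(v),g(v))>\varepsilon-\varepsilon/3=2\varepsilon/3$, a contradiction. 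Thus the condition you wanted, that ``$g(\sigma)$ is disjoint from $\sigma$,'' should be replaced by this non-equality of carriers for the composite chain map. One last small point: the paper defines $\Lambda(f)$ via cohomology while the Hopf formula computes a homological trace; these agree because the two induced maps are transposes over a field, and a sentence to that effect would close the argument cleanly.
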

\begin{remark}
The Lefschetz number $\Lambda(f)$, when the fixed-points of $f$ are non-degenerate, is a signed sum of the fixed-points taking orientations into account. Furthermore, note that the converse is not true. For example, let $f=\id_T$, where $T$ is a torus. Then $\Lambda(\id_T)=\chi(T)=0$, while $\id_T$ clearly has fixed-points.
\end{remark}
The arithmetic analogue of the Lefschetz fixed-point theorem allows us to count the number of points over finite fields of $\mathbb{F}_q$-varieties.
\begin{theorem}[Grothendieck trace formula, Theorem 3.2 of SGA $4\tiny{\frac{1}{2}}$\cite{SGA45}]
Let $X_0$ be an $\mathbb{F}_q$-variety with $\cal{F}_0$ a constructible $\mathbb{Q}_{\ell}$-sheaf on it. Let $X:=X_0\times_{\mathbb{F}_q}\overline{\mathbb{F}}_q$, and let $\cal{F}$ be the pullback of $\cal{F}_0$ to $X$. Then
\[\sum_{x\in X(\mathbb{F}_q)}\tr(F_x;\cal{F}_x)=\sum_i(-1)^i\tr(\Fr^*:H^i_{c,{\textup{\'et}}}(X;\cal{F})\rightarrow H^i_{c,{\textup{\'et}}}(X;\cal{F})),\]
where $\Fr:X\rightarrow X$ is the geometric Frobenius endomorphism, and we are taking compactly supported $\ell$-adic cohomology.
\end{theorem}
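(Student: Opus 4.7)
The plan is to establish the formula first for the constant sheaf $\cal{F}_0 = \overline{\mathbb{Q}}_{\ell}$ and then bootstrap to general constructible $\cal{F}_0$ by dévissage: every constructible sheaf admits a finite filtration whose graded pieces are push-forwards of constant sheaves from locally closed strata, and both sides of the trace formula are additive on short exact sequences of such sheaves via the long exact sequence for compactly supported cohomology. So the content is in the constant-sheaf case. For this I would proceed by induction on $\dim X_0$, stratifying $X_0$ into a smooth open $U_0$ and its closed complement $Z_0$; the excision sequence
\[\cdots\rightarrow H^i_{c,\et}(U;\overline{\mathbb{Q}}_{\ell})\rightarrow H^i_{c,\et}(X;\overline{\mathbb{Q}}_{\ell})\rightarrow H^i_{c,\et}(Z;\overline{\mathbb{Q}}_{\ell})\rightarrow\cdots\]
is Frobenius-equivariant, so both sides of the desired formula split additively, reducing the problem to the smooth case, and ultimately (after compactifying and applying the excision argument once more) to the smooth projective case.

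For a smooth projective $X_0$ over $\mathbb{F}_q$, the strategy is to apply a Lefschetz--Verdier style trace formula in étale cohomology. Concretely, using proper base change and Poincaré--Verdier duality on $X\times X$, one can identify
\[\sum_i(-1)^i\tr(\Fr^*\mid H^i_{\et}(X;\overline{\mathbb{Q}}_{\ell}))=(\Gamma_{\Fr}\cdot\Delta_X)_{X\times X},\]
where $\Gamma_{\Fr}$ is the graph of geometric Frobenius and $\Delta_X$ is the diagonal, the intersection being taken in compactly supported cohomology via the Künneth decomposition. The key local computation is that at every $\mathbb{F}_q$-point $x\in X_0(\mathbb{F}_q)$ the differential $d\Fr_x$ on the tangent space is identically zero (Frobenius raises to the $q$-th power, hence differentiates to $0$), so $\Gamma_{\Fr}$ meets $\Delta_X$ transversally at $x$ with local intersection multiplicity equal to $1$. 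Summing local contributions then produces exactly $|X_0(\mathbb{F}_q)|$ on the right-hand side.

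The main obstacle is the rigorous construction of the Lefschetz--Verdier trace and the verification that the étale-local intersection multiplicity at each Frobenius fixed point is $1$; this is where the six-functor formalism of SGA 4 and the cohomological intersection theory of SGA 5 become essential. A slicker but equivalent route avoids explicit local terms by working in the compactified setting directly: deduce the trace formula for smooth projective $X$ purely from the Künneth formula and Poincaré duality (identifying $\tr(\Fr^*)$ with a pairing of the class $[\Gamma_{\Fr}]$ against $[\Delta_X]$), then incorporate a general $\cal{F}_0$ by the dévissage above. Once this skeleton is in place, the extension to arbitrary schemes of finite type over $\mathbb{F}_q$ follows formally from Nagata compactification and excision, yielding the formula in the stated generality.
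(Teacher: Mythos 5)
This statement is cited in the paper from SGA~$4\frac{1}{2}$ and is not proved there, so there is no internal proof to compare against; I will assess your sketch on its own terms.

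Your outline captures the broad shape of the classical argument (dévissage plus a Lefschetz--Verdier type computation with the observation that $d\Fr=0$ forces transversality and local multiplicity $1$), but it has a genuine gap at the very first reduction. You claim every constructible $\mathbb{Q}_{\ell}$-sheaf admits a finite filtration whose graded pieces are pushforwards of \emph{constant} sheaves from locally closed strata. That is false: constructibility only gives you a stratification on each stratum of which $\cal{F}_0$ is \emph{lisse}, i.e.\ corresponds to a continuous representation of the \'etale fundamental group of the stratum into $GL_n$ of a finite extension of $\mathbb{Q}_{\ell}$, and such representations are in general neither trivial nor even of finite image. A nontrivial lisse sheaf cannot be filtered with constant graded pieces, and the local term $\tr(F_x;\cal{F}_x)$ genuinely depends on the Galois representation at $x$. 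Consequently the constant-sheaf case is not ``the content'' of the theorem; the lisse case carries real additional content (it is precisely this case for which the proof in the cited source uses Noetherian induction to reduce to lisse sheaves on curves and then argues via $L$-functions and a comparison with finite coefficient rings $\mathbb{Z}/\ell^n$, rather than via Lefschetz--Verdier). A further, smaller concern: in positive characteristic you cannot freely compactify a smooth variety to a \emph{smooth} projective one, so ``applying excision once more'' to land in the smooth projective setting requires either resolution (unavailable in general), alterations, or an induction on dimension that keeps track of singular compactifications; your sketch glosses over this. The $d\Fr=0$ transversality computation and the additivity of both sides along excision triangles are correct as stated, but the proposal as written does not prove the theorem in the generality claimed.
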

As a corollary, we obtain the Grothendieck-Lefschetz fixed-point theorem if we take $\cal{F}_0=\mathbb{Q}_{\ell}$.
\begin{corollary}[Grothendieck-Lefschetz fixed-point theorem] With the above notation, we obtain
\[|X(\mathbb{F}_q)|=\sum_i(-1)^i\tr(\Fr^*:H^i_{c,{\textup{\'et}}}(X;\mathbb{Q}_{\ell})\rightarrow H^i_{c,{\textup{\'et}}}(X;\mathbb{Q}_{\ell})).\]
\end{corollary}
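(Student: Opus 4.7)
The plan is to apply the Grothendieck trace formula stated just above, taking $\cal{F}_0 = \mathbb{Q}_{\ell}$ to be the constant $\ell$-adic sheaf on $X_0$. The right-hand side of the trace formula then becomes, verbatim, the expression
\[\sum_i(-1)^i\tr(\Fr^*:H^i_{c,{\textup{\'et}}}(X;\mathbb{Q}_{\ell})\rightarrow H^i_{c,{\textup{\'et}}}(X;\mathbb{Q}_{\ell}))\]
appearing in the corollary, so the only thing to verify is that the left-hand side collapses to $|X(\mathbb{F}_q)|$.

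For this, I would analyze the local contribution at each $x \in X(\mathbb{F}_q)$. For the constant sheaf $\mathbb{Q}_\ell$, the stalk $\cal{F}_x$ at any geometric point is canonically identified with $\mathbb{Q}_\ell$, and the local Frobenius $F_x$ acts through this identification as the identity (the constant sheaf has no monodromy, so the geometric Frobenius acts trivially on each stalk). Hence $\tr(F_x;\cal{F}_x) = \tr(\id;\mathbb{Q}_\ell) = 1$ for every such $x$, and summing over $\mathbb{F}_q$-points yields $|X(\mathbb{F}_q)|$.

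Combining the two observations gives the stated equality. There is essentially no obstacle here; the entire content of the statement is packaged in the trace formula itself, and the corollary is a matter of plugging in the constant sheaf and noting that its stalkwise Frobenius traces are all equal to $1$.
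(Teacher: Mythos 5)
Your proposal is correct and matches the paper's approach exactly: the paper simply notes that the corollary follows by taking $\cal{F}_0=\mathbb{Q}_{\ell}$ in the Grothendieck trace formula, and you supply the (straightforward) verification that each local trace $\tr(F_x;\cal{F}_x)$ equals $1$ so that the left-hand side becomes $|X(\mathbb{F}_q)|$.
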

Therefore, one way to obtain estimates on $|X(\mathbb{F}_q)|$ is to understand the compactly supported cohomology of $X$ and apply the Weil conjectures. We discuss the relevant cohomological results in the following subsection.
\subsection{Cohomological results}
It often happens that we understand the cohomology of a smooth projective $\mathbb{F}_q$-variety in a good range. In such situations, we are able to prove square-root cancellation results. One of the tools used in the understanding of the cohomology groups is the following theorem due to N.Katz.
\begin{lemma}[Katz's theorem, appendix to Hooley's \cite{Katzapp}]\label{Katztheorem} If $X$ is an $n$-dimensional projective complete intersection over $\overline{\mathbb{F}}_q$ with singular locus $Z$ of dimension $d$ ($d=-1$ if $Z=\emptyset$), then $H^i_{\textup{\'et}}(X;\mathbb{Q}_{\ell})\simeq\mathbb{Q}_{\ell}(-i/2)$ for $n+d+1<i\leq 2n$, where $\mathbb{Q}_{\ell}(-i/2)=0$ for $i$ odd.
\end{lemma}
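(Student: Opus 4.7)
The statement is Katz's theorem as printed in Hooley's appendix, and my aim is to sketch the natural proof strategy rather than to reproduce it in full.

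The case $d=-1$ (so $X$ is smooth) is classical. If $X\subset\mathbb{P}^N$ is a smooth projective complete intersection of dimension $n$, iterated weak Lefschetz gives $H^i_{\et}(X;\mathbb{Q}_\ell)\cong H^i_{\et}(\mathbb{P}^N;\mathbb{Q}_\ell)$ for $i<n$. Combining with Poincar\'e duality on $X$ and the Weil conjectures (which guarantee $H^i$ is pure of weight $i$ for $X$ smooth and proper), the displayed formula is forced in the dual range $n<i\leq 2n$: odd degrees vanish and even degrees are one-dimensional pure Tate. Since $n+d+1=n$ when $d=-1$, this matches the claim.

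For the singular case, the strategy is to reduce to the smooth case via a generic linear section. Let $L\subset\mathbb{P}^N$ be a generic linear subspace of codimension $d+1$. Because $\dim Z=d$, Bertini applied to $Z$ shows that for generic $L$ we have $L\cap Z=\emptyset$, so $X\cap L$ is a \emph{smooth} complete intersection of dimension $n-d-1$, to which the smooth case applies. The main step is then to show that the iterated restriction map
\[H^i_{\et}(X;\mathbb{Q}_\ell)\longrightarrow H^i_{\et}(X\cap L;\mathbb{Q}_\ell)\]
is an isomorphism throughout the range $n+d+1<i\leq 2n$. I would attack this by cutting down one hyperplane at a time: for a single generic hyperplane $H$, the complement $X\setminus(X\cap H)$ is affine of dimension $n$, so Artin's vanishing theorem and the long exact sequence of the pair $(X,X\cap H)$ give strong constraints, and Deligne's purity theorem should pin down the weights in the asserted range.

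The main obstacle is making the singular weak Lefschetz step precise. The textbook form of weak Lefschetz assumes the affine complement is smooth, which would require the hyperplane to contain the entire singular locus. The sharper statement needed here must genuinely exploit the complete-intersection hypothesis, in particular that $X$ is a local complete intersection and hence Cohen--Macaulay with a well-behaved dualizing complex. The resulting cohomological shift, together with a weight count in terms of $\dim Z=d$, should then produce exactly the range $n+d+1<i\leq 2n$ in which cohomology is pure Tate.
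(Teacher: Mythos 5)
The paper does not prove this lemma: it is quoted verbatim from the appendix Katz wrote for Hooley's paper, and the only thing the text adds is the one-line remark that ``this is a consequence of the Lefschetz hyperplane theorem and the semi-perversity of vanishing cycles.'' So there is no in-paper argument to compare against, and your smooth-case discussion (weak Lefschetz below the middle degree, then Poincar\'e duality above it) is the standard one and is correct.

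For the singular case, however, your sketch has a concrete bookkeeping error that makes the central claim false as stated. You take a generic codimension-$(d+1)$ linear space $L$ with $L\cap Z=\emptyset$, so $X\cap L$ is a smooth complete intersection of dimension $n-d-1$, and you then assert that the restriction $H^i_{\et}(X;\mathbb{Q}_\ell)\to H^i_{\et}(X\cap L;\mathbb{Q}_\ell)$ is an isomorphism for $n+d+1<i\leq 2n$. This cannot hold: $H^i_{\et}(X\cap L;\mathbb{Q}_\ell)=0$ for $i>2(n-d-1)$, whereas $H^{2n}_{\et}(X;\mathbb{Q}_\ell)\cong\mathbb{Q}_\ell(-n)$ is nonzero, so restriction in degree $2n$ is not an isomorphism (and, for $d\geq 0$, the same failure occurs throughout the upper part of your stated range). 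What you actually want in the high range is the \emph{Gysin} map $H^{i-2(d+1)}_{\et}(X\cap L;\mathbb{Q}_\ell)(-(d{+}1))\to H^i_{\et}(X;\mathbb{Q}_\ell)$, which at least has the right source and target for $n+d+1<i\leq 2n$. More substantively, proving that this Gysin map (or the corresponding one-hyperplane step) is an isomorphism in the precise range quantified by $d$ is exactly where the complete-intersection hypothesis must enter, and the mechanism Katz uses is the semi-perversity of the vanishing/nearby cycles complex attached to slicing a complete intersection by a pencil of hyperplanes; Artin vanishing for the affine complement alone does not give a bound that sees $\dim Z$. Your final paragraph does say a ``sharper statement'' exploiting the complete-intersection hypothesis is needed, but identifying it with Cohen--Macaulayness and the dualizing complex is not the right handle---it is the perverse $t$-structure bound on vanishing cycles that produces the shift by $d$, and that is the ingredient the paper explicitly names and your sketch omits.
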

This is a consequence of the Lefschetz hyperplane theorem and the semi-perversity of vanishing cycles. Note that when $X$ is smooth, Poincar\'e duality along with Katz's theorem imply that the same is true for $0\leq i<n$. Consequently, for smooth projective varieties, the remaining cohomology group not adressed is the middle cohomology $H^n_{\textup{\'et}}(X;\mathbb{Q}_{\ell})$. For non-projective varieties, we will need the following lemma of Sawin.
\begin{lemma}[Sawin, Lemma 2.4 of \cite{Sawin}]\label{Sawinlemma}Let $\overline{X}$ be a complete intersection in projective space over $\overline{\mathbb{F}}_q$, let $D$ be a hyperplane in $\overline{X}$, and let $X:=\overline{X}\setminus D$. Let $Z$ be the complement of the largest open subset of $\overline{X}$ where $\overline{X}$ is smooth and $D$ is a smooth divisor. Then $H^i_{\textup{\'et},c}(X;\mathbb{Q}_{\ell})=0$ for $\dim Z+\dim X+1<i<2\dim X$. If $\dim Z<\dim X-1$, then $H^{2\dim X}_{\textup{\'et},c}(X;\mathbb{Q}_{\ell})=\mathbb{Q}_{\ell}(-\dim X)$.
\end{lemma}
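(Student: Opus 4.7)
The plan is to combine the excision long exact sequence in compactly supported \'etale cohomology for the open immersion $X = \overline{X} \setminus D$ with Katz's theorem (Lemma~\ref{Katztheorem}) applied to both $\overline{X}$ and its hyperplane section $D$. First I note that $D$ is itself a complete intersection of dimension $n - 1$ (where $n = \dim X$), and that both $\textup{Sing}(\overline{X})$ and $\textup{Sing}(D)$ are contained in $Z$, hence have dimension at most $\dim Z$.

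Since $\overline{X}$ and $D$ are proper, excision reads
\[\cdots \to H^{i-1}(D) \to H^i_c(X) \to H^i(\overline{X}) \xrightarrow{\iota^*} H^i(D) \to \cdots\]
Applying Katz to $\overline{X}$ gives $H^i(\overline{X}) \simeq \mathbb{Q}_\ell(-i/2)$ for $n + \dim Z + 1 < i \leq 2n$, and applying it to $D$ gives $H^i(D) \simeq \mathbb{Q}_\ell(-i/2)$ for $n + \dim Z < i \leq 2n - 2$, with the convention $\mathbb{Q}_\ell(-i/2) = 0$ for odd $i$. Under the standard identification where these groups are generated by powers of the hyperplane class, $\iota^*$ sends $h_{\overline{X}}^{i/2}$ to $h_D^{i/2}$, so in any degree where both sides are in Katz's range, $\iota^*$ is an isomorphism.

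Plugging these into the sequence, for $i \in [n + \dim Z + 3, 2n - 2]$ both $i$ and $i - 1$ lie in the Katz range for $\overline{X}$ and $D$, so the relevant restriction maps are isomorphisms and $H^i_c(X) = 0$. At $i = 2n - 1$, we have $H^{2n-1}(\overline{X}) = 0$ (odd degree in Katz range) and $H^{2n-1}(D) = 0$ by dimension, while $\iota^*$ at degree $2n - 2$ is still an isomorphism, again giving vanishing. For the top degree under the hypothesis $\dim Z < n - 1$, the sequence reduces to $0 \to H^{2n}_c(X) \to H^{2n}(\overline{X}) \to 0$ (since $H^{2n-1}(D) = H^{2n}(D) = 0$), and Katz applies at $i = 2n$ precisely because $\dim Z \leq n - 2$, yielding $H^{2n}_c(X) \simeq H^{2n}(\overline{X}) = \mathbb{Q}_\ell(-n)$.

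The main obstacle is the edge case $i = n + \dim Z + 2$: here $H^{i-1}(\overline{X}) = H^{n + \dim Z + 1}(\overline{X})$ sits exactly one degree below Katz's range for $\overline{X}$ and is a priori unidentified. I would handle this without computing the full group. Regardless of its structure, when $i - 1$ is even the class $h_{\overline{X}}^{(i-1)/2}$ lives there and restricts to $h_D^{(i-1)/2}$, which is the generator of $H^{i-1}(D)$ by Katz; when $i - 1$ is odd, $H^{i-1}(D) = 0$ and surjectivity is automatic. Either way, $\iota^*$ is surjective at degree $i - 1$, which together with injectivity at degree $i$ forces $H^i_c(X) = 0$.
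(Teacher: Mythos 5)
The paper does not prove this lemma; it is quoted verbatim from Sawin's paper \cite{Sawin} (Lemma 2.4 there), so there is no in-paper proof to compare against. That said, your reconstruction is the natural one and, with one caveat, appears correct. The caveat is that your argument at the edge degree $i-1=n+\dim Z+1$ (and also at the degrees where you claim $\iota^*$ is an isomorphism) relies not just on the abstract isomorphism $H^{i}(-;\mathbb{Q}_\ell)\simeq\mathbb{Q}_\ell(-i/2)$ asserted in Lemma~\ref{Katztheorem}, but on the finer statement that this isomorphism is realized by restriction of the hyperplane class from the ambient projective space. This is what lets you assert that $h_{\overline{X}}^{(i-1)/2}$ restricts to a \emph{generator} of $H^{i-1}(D;\mathbb{Q}_\ell)$ and that $\iota^*$ is an isomorphism in the doubly-safe range. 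The refined statement is true and is part of the standard weak Lefschetz/vanishing-cycles proof of Katz's theorem, but it is not literally contained in the version quoted in this paper, so you should invoke it explicitly. Two further minor points you should make precise: (i) $D$ is a complete intersection of dimension $\dim\overline{X}-1$ only when the hyperplane does not contain any irreducible component of $\overline{X}$, which is implicit in calling it a hyperplane section; and (ii) for the identification $H^{2n}(\overline{X};\mathbb{Q}_\ell)\simeq\mathbb{Q}_\ell(-n)$ at the top degree one either uses $\dim Z\leq n-2$ to put $2n$ in Katz's range as you do, or the fact that positive-dimensional complete intersections are connected (Remark~\ref{connectivityremark}). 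With those points supplied, the excision-plus-Katz route you take is correct and is almost certainly the same mechanism underlying Sawin's proof.
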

We will also need a transcendental analogue of the above theorem of Katz, at least one for \textit{smooth} projective complex varieties. In the smooth case, this is a consequence of the Lefschetz hyperplane theorem.
\begin{lemma}\label{complexlemma}
Suppose $X$ is an $n$-dimensional smooth projective complex complete intersection, and suppose $f:X\rightarrow X$ is an endomorphism such that $f^{-1}E$ is algebraically equivalent to $qE$ for some ample divisor $E$ and some integer $q>0$. Then $H^i(X;\mathbb{C})\simeq\mathbb{C}(-i/2)$ for $n<i\leq 2n$, where $\mathbb{C}(-i/2)=0$ for odd $i$ and is the one-dimensional complex vector space on which $f^*$ acts via multiplication by $q^{i/2}$ for even $i$.
\end{lemma}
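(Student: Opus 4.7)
The plan is to split the claim into two steps: first identify the $\mathbb{C}$-dimension of $H^i(X;\mathbb{C})$ for $n<i\leq 2n$, and then, in the even case, identify the $f^*$-action as multiplication by $q^{i/2}$.

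For the dimension count, I would iterate the Lefschetz hyperplane theorem for the smooth complete intersection $X\hookrightarrow\mathbb{P}^N$ to obtain an isomorphism $H^j(\mathbb{P}^N;\mathbb{C})\xrightarrow{\sim}H^j(X;\mathbb{C})$ for $j<n$. For $n<i\leq 2n$ we have $2n-i<n$, so combining Poincar\'e duality on the smooth projective $X$ with Lefschetz hyperplane gives $\dim H^i(X;\mathbb{C})=\dim H^{2n-i}(\mathbb{P}^N;\mathbb{C})$, which equals $1$ when $i$ is even and $0$ when $i$ is odd (since $i$ and $2n-i$ share parity). This already settles the odd case, where the claimed $H^i(X;\mathbb{C})\cong\mathbb{C}(-i/2)=0$ is automatic.

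For the even case $i=2k$, I would work with the ample class $[E]\in H^2(X;\mathbb{C})$. Since the cycle class map factors through the N\'eron--Severi group, algebraic equivalence of divisors implies equality of their classes in $H^2(X;\mathbb{C})$, so the hypothesis $f^{-1}E\sim_{\mathrm{alg}}qE$ yields $f^*[E]=q[E]$ in $H^2(X;\mathbb{C})$. By the ring structure on cohomology, $f^*([E]^k)=(f^*[E])^k=q^k[E]^k$. To finish, I would show that $[E]^k$ generates the one-dimensional space $H^{2k}(X;\mathbb{C})$: since $E$ is ample, the top self-intersection $(E^n)$ is positive and so $[E]^n\neq 0$, whence the ring identity $[E]^n=[E]^k\cdot[E]^{n-k}$ forces $[E]^k\neq 0$ for all $0\leq k\leq n$. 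Combined with one-dimensionality from the first step, this identifies $f^*|_{H^{2k}(X;\mathbb{C})}$ with multiplication by $q^k=q^{i/2}$.

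The argument is essentially a bookkeeping exercise combining the Lefschetz hyperplane theorem, Poincar\'e duality, and the ring structure of singular cohomology. The only point requiring a moment's care is the passage from algebraic equivalence of divisors to equality of classes in $H^2(X;\mathbb{C})$ via the cycle class map on the N\'eron--Severi group; apart from this, no serious obstacle arises, and in particular Serre's Theorem~\ref{Serrethm} is not needed for this lemma (it would only pin down the eigenvalues up to modulus, whereas the one-dimensionality of $H^{2k}(X;\mathbb{C})$ lets us read off the exact scalar from the ring computation above).
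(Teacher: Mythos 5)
Your proof is correct and follows essentially the same route as the paper's: the Lefschetz hyperplane theorem plus Poincar\'e duality pins down $H^i(X;\mathbb{C})$ above the middle degree, and the cup-product structure applied to the class of the ample divisor gives the $f^*$-action. Your version is a touch more careful than the paper's in working directly with $[E]$ rather than the hyperplane class $[\omega]$ (the paper's step $f^*[\omega]=q[\omega]$ implicitly uses one-dimensionality of $H^2$ to compare $[\omega]$ with $[E]$), and in spelling out the passage from algebraic equivalence to cohomological equality; otherwise the arguments coincide.
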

\begin{proof}
By the Lefschetz hyperplane theorem, $H^i(X;\mathbb{C})$ is generated by powers of the hyperplane class $[\omega]$ when $i>n$. Therefore, $H^i(X;\mathbb{C})=\mathbb{C}\left<[\omega]^{i/2}\right>$ for $i>n$, where $[\omega]^{i/2}=0$ if $i$ is odd. Since $f^*[\omega]=q[\omega]$, $f^*$ acts on $H^i(X;\mathbb{C})$ as multiplication by $q^{i/2}$. Consequently, $H^i(X;\mathbb{C})\simeq\mathbb{C}(-i/2)$ for $i>n$, as required.
\end{proof}
\begin{remark}
Our notation $\mathbb{C}(-i/2)$ is idiosyncratic. Note that though $f$ is omitted from the notation, it depends on $f$. Furthermore, by Poincar\'e duality, only the middle dimension is not understood.
\end{remark}
\subsection{Sum of Betti numbers}
When one wants to estimate exponential sums, in order to understand the error, it is useful to bound from above the sum of the Betti numbers of the variety under consideration. A few theorems, due to Katz, give us bounds on the sum of the Betti numbers of a variety in $\mathbb{P}^N$ in terms of the number of and degrees of the defining equations. Though we will not use such theorems in this paper, one such theorem is as follows. Henceforth, the notation $h^i_?(X;F)$ denotes $\dim_FH^i_?(-;F)$, where $F$ is a field and $H^i_?$ is some cohomology theory.
\begin{theorem}[Katz, corollary to theorem 3 of \cite{Katzbetti}]\label{Katzthm} Over an algebraically closed field $k$, let $X\subseteq\mathbb{P}^N$ be defined by the vanishing of $r\geq 1$ homogeneous equations all of degree at most $d$. Then for any prime $\ell\in k^*$, we have the inequalities
\[\sum_ih^i_{\textup{\'et}}(X;\mathbb{Q}_{\ell})\leq \frac{65}{48}\cdot 2^r(13+4rd)^{N+2}\]
and
\[\sum_ih^i_{\textup{\'et}}(X;\mathbb{Q}_{\ell})\leq 9\cdot 2^r(3+rd)^{N+1}.\]
\end{theorem}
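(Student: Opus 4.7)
The plan is a double induction on the number of defining equations $r$ and on the ambient dimension $N$, carried out for compactly supported $\ell$-adic cohomology of affine varieties. Since $X\subseteq\mathbb{P}^N$ is projective, $h^i_\et(X;\mathbb{Q}_\ell)=h^i_{\et,c}(X;\mathbb{Q}_\ell)$; covering $X$ by the $N+1$ standard affine charts $X\cap\{x_j\neq 0\}$ and running Mayer--Vietoris (or the Čech spectral sequence of the cover) reduces the statement to bounding $\sum_i h^i_{\et,c}(V;\mathbb{Q}_\ell)$ for a closed subscheme $V\subseteq\mathbb{A}^M$ cut out by at most $r$ polynomials of degree $\leq d$, at the cost of a controlled multiplicative constant that is absorbed into the final bookkeeping.

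The outer induction is on $r$. Given $V=V(f_1,\ldots,f_r)$, write $W:=V(f_1,\ldots,f_{r-1})$ and $U:=W\setminus V(f_r)$. The open--closed long exact sequence
\[\cdots\to H^i_{\et,c}(U;\mathbb{Q}_\ell)\to H^i_{\et,c}(W;\mathbb{Q}_\ell)\to H^i_{\et,c}(V;\mathbb{Q}_\ell)\to H^{i+1}_{\et,c}(U;\mathbb{Q}_\ell)\to\cdots\]
gives $\sum_i h^i_{\et,c}(V)\leq\sum_i h^i_{\et,c}(W)+\sum_i h^i_{\et,c}(U)$. The $W$-term is handled by the inductive hypothesis with $r-1$ equations. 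For $U$ I use the graph trick: $U$ is isomorphic to the closed subvariety of $\mathbb{A}^{M+1}$ cut out by $f_1=\cdots=f_{r-1}=0$ together with $y f_r=1$, i.e.\ by $r$ equations of degree $\leq d+1$ in one more variable, so it falls again under the inductive hypothesis (with a single unit increase in degree and in ambient dimension). Iterating this recursion produces bounds of the shape $c\cdot 2^r(\alpha+\beta r d)^{N+\delta}$.

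The base case $r=1$ is a single hypersurface. Here the weak Lefschetz theorem controls $h^i$ in the range $i\neq \dim V$ by the Betti numbers of projective space, while the middle Betti number is controlled by the Euler characteristic, itself bounded polynomially in $d$ and $N$ via a Chern-class/Bezout computation (for smooth hypersurfaces) and extended to arbitrary hypersurfaces by a specialization argument. Combining with Artin vanishing, which collapses $H^i_{\et,c}$ of a smooth affine variety of dimension $n$ to the range $n\leq i\leq 2n$, seeds the recursion with the correct exponent $N+2$ (respectively $N+1$).

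The main obstacle is the constant chasing. The two inequalities $\frac{65}{48}\cdot 2^r(13+4rd)^{N+2}$ and $9\cdot 2^r(3+rd)^{N+1}$ are two optimizations of the same underlying recursion: at each inductive step one loses a factor of $2$ (from the long exact sequence), a unit of degree and of ambient dimension (from the graph trick), and a controlled factor from the Mayer--Vietoris reduction. Balancing these losses against the base-case exponent so that the final exponent is exactly $N+2$ (resp.\ $N+1$) with the precise constants $65/48$, $13$, $4$ (resp.\ $9$, $3$) is a delicate arithmetic optimization, and is essentially the content of the proof once the geometric ingredients---Mayer--Vietoris, the graph trick, weak Lefschetz, and a Chern/Bezout Euler characteristic bound---are in place.
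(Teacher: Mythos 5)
The paper itself does not prove this statement: it is quoted verbatim from Katz (as the corollary to Theorem 3 of \cite{Katzbetti}) and is explicitly not used elsewhere in the paper, so your proposal has to be judged against Katz's argument and on its own merits. As written, it has a structural gap. Your outer induction on $r$ is circular: after the open--closed excision sequence you convert $U=W\setminus V(f_r)$ by the graph trick into a closed subvariety of $\mathbb{A}^{M+1}$ cut out by $f_1,\dots,f_{r-1}$ together with $yf_r-1$, which is again $r$ equations, so the ``inductive hypothesis'' you invoke for $U$ is the statement being proved, now with larger ambient dimension and larger degree. The recursion it yields, $B(r,d,M)\leq B(r-1,d,M)+B(r,d+1,M+1)$, does not terminate by induction on $r$ alone; and even if one unfolds it, each pass through the graph trick adds one to the ambient dimension and to the degree, so the best output of this scheme is a bound with exponent $N+O(r)$ and degree dependence $d+O(r)$. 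That is structurally different from, and weaker than, the claimed inequalities, in which the exponent $N+2$ (resp.\ $N+1$) is independent of $r$ and the degree enters only through the product $rd$. The shape $2^r(\cdot+\cdot\,rd)^{N+\mathrm{const}}$ is the fingerprint of the actual mechanism: an inclusion--exclusion (Mayer--Vietoris/\v{C}ech over the $2^r$ subsets $S$ of the defining equations) that replaces intersections by unions of hypersurfaces, i.e.\ by single hypersurfaces $V\bigl(\prod_{i\in S}f_i\bigr)$ of degree at most $rd$ in the \emph{same} ambient space, reducing everything to the hypersurface case, where weak Lefschetz/Artin vanishing plus an Euler characteristic computation give a bound polynomial in the degree with exponent tied to $N$ alone.

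Two further points. For the base case, the middle Betti number of a possibly singular hypersurface is not simply ``controlled by the Euler characteristic,'' and the proposed ``specialization argument'' needs care: Betti numbers are not upper semicontinuous under specialization in the naive sense, and Katz handles the singular case by affine vanishing theorems and induction on dimension via generic hyperplane sections rather than by specialization. Finally, the statement asserts specific constants ($\tfrac{65}{48}$, $13$, $4$, and $9$, $3$), so a sketch that absorbs ``controlled multiplicative constants'' from a chart covering and defers the constant chasing as a ``delicate arithmetic optimization'' has not established the inequalities as stated; with your recursion it could not, since the exponent itself comes out wrong.
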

In our case, we will not use such an inequality that depends on the algebraic structure of the variety. We will instead use an inequality that depends on the Riemannian structures which can be put on the complex analytification of the algebraic varieties under consideration. This is accomplished by a theorem of Gromov on sums of Betti numbers of closed connected Riemannian manifolds.
\begin{theorem}[Gromov, 1981, 0.2B of \cite{Gromov}]\label{Gromov}Fix a field $F$. There is a constant $C=C(n,F)$ such that for each $n$-dimensional closed connected Riemannian manifold $M$ with sectional curvature at least $-\kappa^2$, $\kappa\geq 0$, and diameter at most $D$,
\[\sum_ih^i(M;F)\leq C^{1+\kappa D}.\]
\end{theorem}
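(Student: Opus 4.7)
The plan is to reduce to a normalized setting and then combine volume comparison with a critical point theory for distance functions. First, by rescaling the metric by the factor $\kappa$ (if $\kappa > 0$; the case $\kappa = 0$ is handled similarly or by letting $\kappa \to 0^+$), we may assume that the sectional curvature is at least $-1$ while the diameter becomes at most $\kappa D$. So it suffices to produce a bound of the form $C(n,F)^{1+\textup{diam}(M)}$ for closed connected Riemannian $n$-manifolds with sectional curvature $\geq -1$.

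The first ingredient is the Bishop-Gromov volume comparison theorem: under the lower sectional (in fact, Ricci) curvature bound, any metric ball $B(p, r) \subseteq M$ has volume at most $V_n(r)$, the volume of the ball of radius $r$ in hyperbolic $n$-space, which grows like $e^{(n-1)r}$. A standard greedy packing argument then produces a covering of $M$ by at most $N \leq A(n)^{1+\kappa D}$ unit balls $\{B(p_i, 1)\}$. Passing to a Mayer-Vietoris (\v{C}ech-to-derived functor) spectral sequence for this covering, one reduces the bound on $\sum_i h^i(M; F)$ to a bound on $\sum_i h^i(B; F)$ for $B$ a ball (or finite intersection of balls) of radius at most $2$, multiplied by the combinatorial complexity of the nerve, which is itself controlled by $N$.

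The heart of the argument is then showing that each such small ball has a $F$-Betti sum bounded by a constant $C_0 = C_0(n, F)$ depending only on the dimension (and coefficient field). This is the content of Gromov's critical point theory for the distance function $d(p, \cdot)$, as developed with Cheeger and in \cite{Gromov}. A point $q \neq p$ is \emph{regular} for $d(p, \cdot)$ if there is a tangent direction $v \in T_qM$ making an angle strictly less than $\pi/2$ with the initial velocity of every minimal geodesic from $q$ to $p$. On regions containing no critical points, a gradient-like vector field constructed from $v$ provides an isotopy retracting sublevel sets, so the topology changes only at critical values. Using Toponogov's triangle comparison theorem against the model space of constant curvature $-1$, one shows that the critical points of $d(p, \cdot)$ inside $B(p, 2)$ are separated by a definite angle at $p$, hence their number is bounded by $N_0(n)$. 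An induction on the critical levels yields a CW decomposition of the ball with at most $C_0(n)$ cells, bounding its Betti numbers as required.

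The main obstacle is a technical one: the distance function $d(p, \cdot)$ is only Lipschitz, not smooth, so the classical Morse-theoretic deformation lemma must be replaced by a careful selection of a continuous gradient-like vector field using a partition of unity over the (possibly multiple) minimizing geodesics, and the isotopy lemma must be proven directly from Toponogov comparison rather than as a corollary of smooth Morse theory. Once this is in place, combining the covering by $\leq A(n)^{1+\kappa D}$ unit balls, each contributing $\leq C_0(n, F)$ to the Betti sum via Mayer-Vietoris, produces the claimed bound $\sum_i h^i(M; F) \leq C(n, F)^{1+\kappa D}$.
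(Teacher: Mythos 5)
This theorem is cited in the paper rather than proved there, so the relevant comparison is with Gromov's own argument in \cite{Gromov}. Your outline has the right building blocks (rescaling to curvature $\geq -1$, Bishop--Gromov volume comparison and a packing bound, Grove--Shiohama critical point theory for distance functions, Toponogov comparison to limit critical directions), but the overall reduction contains a genuine gap. You claim that each small metric ball $B(p,2)$ in a manifold with $\sec \geq -1$ has $F$-Betti sum bounded by a dimensional constant $C_0(n,F)$, and that a Mayer--Vietoris/nerve argument over a controlled cover then finishes the proof. This key step is false: a lower sectional curvature bound does not control the topology of balls. The entire point of Gromov's argument is to circumvent precisely this obstruction. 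What he controls is not $\sum_i h^i(B(p,r);F)$ but the \emph{content} of a pair of concentric balls, i.e.\ the rank of the inclusion-induced map $H_*(B(p,r);F)\to H_*(B(p,R);F)$ for $r<R$, which \emph{is} bounded, and the proof is an induction over scales comparing contents of nested balls, not a Mayer--Vietoris computation over a fixed cover.

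A second, related issue: you assert that an induction on critical levels of $d(p,\cdot)$ yields a CW decomposition of the ball with $\leq C_0(n)$ cells. Unlike smooth Morse theory, critical point theory for Lipschitz distance functions does not assign an index to a critical point and does not produce a cell attachment when a critical level is crossed; it only gives an isotopy (hence deformation retraction) between sublevel sets across intervals free of critical values (the Grove--Shiohama isotopy lemma). So even after bounding the number of critical points via Toponogov, you do not obtain a CW structure, and hence no direct Betti bound on the ball. Gromov's actual proof combines the isotopy lemma with the content mechanism and a delicate double induction (over scale and over a ``corank'' parameter) to absorb the failure of small balls to have bounded topology; consulting 0.2B and \S1--\S3 of \cite{Gromov}, or the expositions by Cheeger or by Zhu, would show where the content inequality replaces the ball-by-ball Betti bound you posited.
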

\begin{remark}\label{CheegerGromoll}
If $M$ is a \textit{complete} Riemannian manifold with nonnegative sectional curvature, then it is homeomorphic to a vector bundle over a closed connected Riemannian manifold of nonnegative sectional curvature. This is a theorem of J.Cheeger and D.Gromoll from 1972 \cite{CheegerGromoll}. Therefore, this reduces the case of complete Riemannian manifolds of non-negative sectional curvature to that of compact manifolds with $\kappa=0$.
\end{remark}
\begin{remark}\label{Ricci}A theorem of G.Wei gives an upper bound on the sum of Betti numbers of a closed connected Riemannian manifold in terms of its Ricci curvature, diameter, and conjugate radius \cite{Wei}. On the other hand, under certain conditions, there are upper bounds on the sum of Betti numbers in terms of the simplicial volume. See \cite{Gromov2} for details. Therefore, it is possible to prove analogues of the theorems in this note relating these quantities to arithmetic over finite fields and algebraic dynamics.
\end{remark}
We end this section with two questions.
\begin{question}
Since the sum of the Betti numbers is independent of the smooth structure, Gromov's bound suggests the following question. For each smooth structure on a closed manifold $M$, consider the quantity
\[\inf_g\left(\left|\min\left\{\inf K_g,0\right\}\right|D_g^2\right),\]
where we vary over Riemannian metrics $g$ on $M$, and where $K_g$ and $D_g$ are its sectional curvature and diameter, respectively. Is this quantity independent of the smooth structure?
\end{question}
\begin{question}
Katz's theorem depends on the algebraic structure of the variety, while Gromov's theorem depends on the Riemannian structure. What is the relation between the two kinds of bounds?
\end{question}
\section{Proofs of the arithmetic theorems}
We are now ready to prove the arithmetic theorems relating Riemannian structures to the number of solutions over finite fields.
\begin{proof}[Proof of Theorem~\ref{A}.]
First note that we know by Lemma~\ref{Katztheorem} that $H^i_{\textup{\'et}}(X_{\overline{\mathbb{F}}_q};\mathbb{Q}_{\ell})=\mathbb{Q}_{\ell}(-i/2)$ for $n<i\leq 2n$. Consequently, the $\ell$-adic cohomology of $X$ agrees with that of $\mathbb{P}^n$ in this range. As a result of this, the Grothendieck-Lefschetz trace formula, and the Weil conjectures, we have that
\begin{eqnarray*}||X(\mathbb{F}_q)|-|\mathbb{P}^n(\mathbb{F}_q)||&=&\left|\sum_{i\leq n}(-1)^i\tr(\Fr^*:H^i_{\textup{\'et}}(X_{\overline{\mathbb{F}}_q};\mathbb{Q}_{\ell})\rightarrow H^i_{\textup{\'et}}(X_{\overline{\mathbb{F}}_q};\mathbb{Q}_{\ell}))-\sum_{\substack{i\leq n\\ i\textup{ even}}}q^{i/2}\right|\\
&\leq& \left(\sum_{i\leq n}h^i_{\textup{\'et}}(X_{\overline{\mathbb{F}}_q};\mathbb{Q}_{\ell})+(n+1)\right)q^{n/2}.\end{eqnarray*}
On the other hand, by proper base-change $X$ satisfies $H^i_{\textup{\'et}}(X_{\overline{\mathbb{F}}_q};\mathbb{Q}_{\ell})\simeq H^i(\tilde{X}^{an};\mathbb{Q}_{\ell})$. As a result,
\[||X(\mathbb{F}_q)|-|\mathbb{P}^n(\mathbb{F}_q)||\leq \left(\sum_{i}h^i(\tilde{X}^{an};\mathbb{C})+(n+1)\right)q^{n/2}.\]
By Gromov's inequality (Theorem~\ref{Gromov} above), we know that
\[\sum_{i}h^i(\tilde{X}^{an};\mathbb{C})\leq B^{1+\kappa D}\]
for some constant $B$ depending only on $n$. In light of Remark~\ref{connectivityremark}, complete intersections of positive dimension are connected, and so the connectivity condition to Gromov's theorem is satisfied. Therefore, we can take $C=C(n)$ large enough so that
\[\sum_{i}h^i(\tilde{X}^{an};\mathbb{C})+n+1\leq C^{1+\kappa D}.\]
Putting this into the above gives us
\[||X(\mathbb{F}_q)|-|\mathbb{P}^n(\mathbb{F}_q)||\leq C^{1+\kappa D}q^{n/2},\]
as required. The rest of the theorem is easily proved from this.
\end{proof}
As a corollary, we obtain Theorem~\ref{main} and Corollary~\ref{corA}. Now, we would like to prove Theorem~\ref{B} that is for $\mathbb{Q}$-varieties that are not necessarily projective, but whose analytifications admit complete metrics of non-negative sectional curvature.
\begin{proof}[Proof of Theorem~\ref{B}.]
By the result of Cheeger and Gromoll \cite{CheegerGromoll} (see Remark~\ref{CheegerGromoll} following the statement of Gromov's theorem~\ref{Gromov}), $X^{an}$ is a vector bundle over a compact Riemannian manifold with nonnegative sectional curvature. Therefore, by Gromov's theorem (Theorem~\ref{Gromov} above), there is a constant $B$ depending only on the dimension $n$ such
\[\sum_ih^i_{c,{\textup{\'et}}}(X_{\overline{\mathbb{F}}_q};\mathbb{Q}_{\ell})=\sum_ih^i_c(X^{an};\mathbb{C})\leq B.\]
Again, by Remark~\ref{connectivityremark}, the connectivity condition in Gromov's theorem is satisfied. The equality of the compactly supported Betti numbers follows from proper base change. Note that since $X$ is smooth, Poincar\'e duality applied to each component implies that the sum of compactly supported Betti numbers is the sum of the usual (non-compactly supported) Betti numbers. As a consequence of Lemma~\ref{Sawinlemma} due to Sawin, $H^i_{c,{\textup{\'et}}}(X_{\overline{\mathbb{F}}_q};\mathbb{Q}_{\ell})=0$ for $n+d+1<i<2n$ and is $\mathbb{Q}_{\ell}(-n)$ for $i=2n$.\\
\\
Consequently, 
\[||X(\mathbb{F}_q)|-q^n|\leq ((n+d+2)+B)q^{\frac{n+d+1}{2}}.\]
Since $d\leq n$, $n+d+2\leq 2n+2$. We can take $C=2n+2+B$. The rest of the theorem is easily proved once we have this inequality. The conclusion follows.
\end{proof}
\section{Proof of the transcendental theorem}
In a way similar to the previous section, we use the results in the preliminary section to prove Theorem~\ref{C}, a transcendental result.
\begin{proof}[Proof of Theorem~\ref{C}.] First note that by Lemma~\ref{complexlemma}, $H^i(X;\mathbb{C})\simeq\mathbb{C}(-i/2)$ for $n<i\leq 2n$. By the Lefschetz fixed-point theorem,
\[\Lambda(f)=\sum_i(-1)^i\tr(f^*:H^i(X;\mathbb{C})\rightarrow H^i(X;\mathbb{C})).\]
The cohomological computation above implies that
\[\Lambda(f)=\sum_{i\leq n}(-1)^i\tr(f^*:H^i(X;\mathbb{C})\rightarrow H^i(X;\mathbb{C}))+\sum_{2n\geq 2i>n}q^{i},\]
and so
\begin{eqnarray*}|\Lambda(f)-\Lambda(f_{n,q})|&=& \left|\sum_{i\leq n}(-1)^i\tr(f^*:H^i(X;\mathbb{C})\rightarrow H^i(X;\mathbb{C}))-\sum_{i\leq n}q^{i/2}\right|\\ &\leq & (n+1)q^{n/2}+\sum_{i\leq n}h^i(X;\mathbb{C})q^{i/2},\end{eqnarray*}
where the last inequality follows from the triangle inequality and Serre's theorem (Theorem~\ref{Serrethm} above) on the modulus of the eigenvalues of $f^*$ acting on different cohomology groups.
On the other hand, Gromov's inequality (Theorem~\ref{Gromov} above) on the sum of Betti numbers implies that there is a constant $C$ depending only on $n$ such that
\[n+1+\sum_{i\leq n}h^i(X;\mathbb{C})\leq C^{1+\kappa D}.\]
Therefore,
\[|\Lambda(f)-\Lambda(f_{n,q})|\leq C^{1+\kappa D}q^{n/2}.\]
In particular, if $\sum_{i\leq n}q^i>C^{1+\kappa D}q^{n/2}$, then $f$ has a fixed-point. This last condition is satisfied if $q>(C^{2/n})^{1+\kappa D}$, for example. Therefore, there is a constant $K$, depending only on $n$, such that for any connected $n$-dimensional smooth projective complete intersection $X$ admitting a Riemannian metric with sectional curvature bounded below by $-\kappa^2$, $\kappa\geq 0$, and diameter bounded above by $D$, and any algebraic endomorphism $f:X\rightarrow X$ such that $f^{-1}E$ is algebraically equivalent to $qE$ for some ample divisor $E$ and integer $q>K^{1+\kappa D}$, $f$ has a fixed-point.\\
\\
If we apply the theorem to the case where $q\geq 2$ and some power $f^{\circ m}$ such that $q^m>K^{1+\kappa D}$, for example, when $m>(1+\kappa D)\log_2 K$, then we see that there is a constant $\alpha$ depending only on $n$ such that any algebraic endomorphism $f$ with $q\geq 2$ has a periodic point of order $\leq \alpha(1+\kappa D)$. 
\end{proof}

\small{\textsc{Department of Mathematics, University of Regensburg, Regensburg, Germany}}\\
\textit{Email address:} \texttt{\small{masoud.zargar@ur.de}}
\end{document}